\newtheorem{theorem}{Theorem}[section]
\newtheorem{lemma}[theorem]{Lemma} 
\newtheorem{remark}[theorem]{Remark}
\newtheorem{corollary}[theorem]{Corollary}  
\newtheorem{definition}[theorem]{Definition} 
\numberwithin{equation}{section}
\newenvironment{zalu}{{\bf Assumption} $\boldsymbol{[\varphi]}$\hspace{0.2cm}}
\newenvironment{zalc}{{\bf Assumption} $\boldsymbol{[c]}$\hspace{0.2cm}}
\newenvironment{zall}{{\bf Assumption} $\boldsymbol{[\lambda]}$\hspace{0.2cm}}
\newenvironment{zalW}{{\bf Assumption} $\boldsymbol{[W]}$\hspace{0.2cm}}
\newenvironment{zalK}{{\bf Assumption} $\boldsymbol{[K]}$\hspace{0.2cm}}
\newenvironment{zalfun}{{\bf Assumption} $\boldsymbol{[L,M]}$\hspace{0.2cm}}
\newcommand{\rr}{\mathbb{R}}
\begin{document}

\title{Existence and uniqueness of solutions for single\\-population McKendrick-von Foerster
models\\with renewal}

\author{Agnieszka Bart{\l}omiejczyk\\
Faculty of Applied Physics and Mathematics\\
Gda{\'{n}}sk University of Tech\-no\-lo\-gy\\
Gabriela Narutowicza 11/12, 80-233 Gda{\'{n}}sk, Poland\\
e-mail: agnes@mif.pg.gda.pl
\and 
Henryk Leszczy\'nski\\
Institute of Mathematics, University of Gda{\'{n}}sk\\
Wita Stwosza 57, 80-952 Gda{\'{n}}sk, Poland\\
e-mail: hleszcz@mat.ug.edu.pl
\and
Piotr Zwierkowski\\
Institute of Mathematics, University of Gda{\'{n}}sk\\
Wita Stwosza 57, 80-952 Gda{\'{n}}sk, Poland\\
e-mail: zwierkow@mat.ug.edu.pl}

\date{}

\maketitle

\begin{abstract}
We study a McKendrick-von Foerster type equation with renewal. 
This model is represented by a single equation which describes one species which produces young individuals. 
The renewal condition is linear, but takes into account some history of the population.
This model addresses nonlocal interactions between individuals structured by age. 
The vast majority of size-structured models are also treatable. 
Our model generalizes a number of earlier models with delays and integrals. 
The existence and uniqueness is proved through a fixed-point approach to an equivalent integral problem in $L^{\infty}\cap L^1.$
\end{abstract}  

\noindent{\bf Mathematics Subject Classification: 35L45, 35L50, 35D05, 92D25} 

\noindent{\bf Keywords: existence, uniqueness, characteristics, renewal}

\section{Introduction}
Von Foerster-McKendrick models (originated in \cite{VF}) describe populations with structure given by age \cite{GMC}, size \cite{Kato} or level of maturation of individuals \cite{LMW}.  
In the literature there are discrete models of that type with finite \cite{lesl} or infinite matrices \cite{PSW}.

We consider one population with a structure given by the size of individual members or level of maturity and with the birth process expressed by a linear renewal equation. An elementary outline of such models, together with their biological interpretation, is provided in \cite{BC}. The best general reference here is the seminal work \cite{MD} and, for the case of age structure, the books \cite{EP, Webb}. There are a number of existence and uniqueness proofs in literature for different versions of the McKendrick-von Foerster equations and which extend size and age-structured problems, e.g. \cite{KWB, Cus1}. Closer to the techniques used in the paper are the papers \cite{CS}, \cite{Kraev} and \cite{Kato}.
In \cite{opt-cont} a~model concerning demographic and economics problems of ageing populations is studied. The model consists of two McKendrick type equations: for a~population and for a~capital stock. 

The governing equation of a structured population is formulated either in the conservation law form $$\frac{\partial u}{\partial t}+\frac{\partial (cu)}{\partial x}=\tilde\lambda u $$
or in the standard form of a hyperbolic equation 
$$\quad\frac{\partial u}{\partial t}+c\frac{\partial u}{\partial x}=\lambda u.$$
These equations are closely related to each other, it suffices to put $\tilde\lambda=\lambda+\frac{\partial c}{\partial x}.$ In our work $\tilde\lambda$ is denoted by $W$ and it is associated with the change of variables in the integral $\int u\,dx.$ The occurrence of $\frac{\partial c}{\partial x}$ follows from the Liouville theorem. This change of variables shows the dynamics of the initial mass transport $\int\varphi\,\exp(\int_0^t W)\,dx.$ 
The global Lipschitz condition for $c$ and $\lambda$ is not sufficient for the global existence and uniqueness, because the nonlinearity $u\,\lambda$ is strong, e.g. $\frac{\partial u}{\partial t}+\frac{\partial u}{\partial x}= u^2$ possesses local solutions. Global existence is due to boundedness of $c$ and $\lambda.$
This assumption is reasonable and commonly used for these terms.

We continue the sequence of results \cite{DLo} and \cite{lzw, lesz}, which are focused on integral fixed-point equations, generated by the differential-functional problems. As a main tool we construct integral fixed-point 
equations and a functional space, invariant with respect to these equations. 
The space consists of $u$ and $z$ describing densities and sizes, respectively. These densities are absolutely continuous in $t$ and Lipschitz continuous in $x,$ and the total sizes are continuous. The Banach contraction principle is applied in this functional space. The renewal condition causes serious problems for any fixed point theorem, not to mention a functional dependence. For instance, \cite{Per} deals with a simple McKendrick-von Foerster model without functionals,
but the Banach fixed point theorem demands some sophisticated technicalities. 

We formulate the differential problem. Let $a>0$ and denote $E=[0,a]\times\rr_+$ and $E_a=[-\tau,a]\times\rr_+,$
where $\rr_+=[0,+\infty).$ If $t\in[0,a]$ and $z\colon[-\tau,a]\to\rr_+,$ then the Hale functional $z_t$ is given by $z_t(s)=z(t+s)$ for $s\in[-\tau,0],$ see \cite{Hale}. If $u\colon E_a\to\rr_+,$ then we consider a natural family of Hale functionals $u_t(\cdot,x)\colon[-\tau,0]\to\rr_+$ for $x\in \rr_+,$ defined by $u_t(s,x)=u(t+s,x)$ for $s\in[-\tau,0]$ (this is the same Hale functional with the parameter $x$).
Suppose that $c\colon E\times{\mathcal C}_+\to\rr$ and $\lambda\colon E\times{\mathcal C}_+\times{\mathcal C}_+\to\rr,$ where ${\mathcal C}_+$ is the positive cone of the space of continuous functions from $[-\tau,0]$ into $\rr_+.$ Let $\varphi\colon E_0\to\rr,$ where $E_0=[-\tau,0]\times\rr_+.$
Consider the differential-functional equation
\begin{equation}
\label{rr}
\partial_t u(t,x)+c\left(t,x,z_t\right)\,\partial_x u(t,x)=u(t,x)\,\lambda\left(t,x,u_{t}(\cdot,x),z_t\right)
\end{equation}
with the initial condition
\begin{equation}\label{wp} 
u(t,x)=\varphi(t,x)\quad \mbox{for}\quad (t,x)\in{}E_0,
\end{equation}
and the renewal condition
\begin{equation}
\label{odn}
u(t,0)=\int_0^\infty K(t,x)\,u_t(\cdot,x)\,dx\quad \mbox{for}\quad t\in{}[0,a],
\end{equation}
where $K\colon E\to{\mathcal C}_{+}^{\ast},$ ${\mathcal C}_{+}^\ast$ is the cone of positive continuous functionals over ${\mathcal C}_+$ and
\begin{equation}
\label{zu} 
z(t)= \int_0^{\infty} u(t,x)\,dx\quad\mbox{for}\quad t\in[-\tau,a].
\end{equation}
Since $u=\varphi$ on $E_0,$ the well posedness of the problem requires the following consistency condition 
\begin{equation*}
\varphi(0,0)=\int_0^\infty K(0,x)\,\varphi(\cdot,x)\,dx,
\end{equation*}
which is valid throughout the paper. We illustrate the functional dependence appearing in the right sides of equations (\ref{rr}) and (\ref{odn}) by several examples:
\begin{enumerate}
\item classical age structured models without delays or integrals:
$$-u(t,x)\,\mu(t,x,u(t,x),z(t))\quad \mbox{and} \quad\int_0^\infty \tilde K(t,x)\,u(t,x)\,dx,$$ where $\mu\colon E\times\rr_+\times\rr_+\to\rr$ and $\tilde K\colon E\to\rr_+,$ \cite{GMC1, GMC},
\item delayed structures, where death and birth rates depend on certain past states of $u$ and $z:$ 
$$-u(t,x)\,\mu(t,x,u(t-\tau,x),z(t-\tau)) \quad\mbox{and} \quad\int_0^\infty \tilde K(t,x)\,u(t-\tau,x)\,dx$$ with the same functions $\tilde K$, $\mu,$ \cite{MRud, Tch},
\item moving averages for densities and total sizes, typical for cumulation effects in mathematical biology and medicine:
\begin{align*}
-u(t,x)\,\mu\left(t,x,\frac{1}{\tau}\int_{t-\tau}^{t}u(s,x)\,ds,\frac{1}{\tau}\int_{t-\tau}^{t}z(s)\,ds\right)
\end{align*}
and \begin{align*}\int_0^\infty\tilde K(t,x)\frac{1}{\tau}\int_{t-\tau}^{t}u(s,x)ds dx\end{align*}
with the same functions $\tilde K$, $\mu,$ \cite{DPos},
\item the size structured model:
$$-u(t,x)\left[\mu(x,z(t))+\partial_x\gamma(x,z(t))\right],$$
where $\mu$ and $\gamma$ denote the mortality and growth rates of individuals, respectively, and $K\equiv0,$ \cite{Far}.
\end{enumerate}
In the literature one can observe various differential-functional models: classical arguments \cite{GMC1, GMC}, delays \cite{Cus, GK, Kuang}, integrals \cite{DPos}, Hale functionals $z_t$ \cite{Hale, KM}, mixed-types (e.g. $z_t$ and its multidimensional generalization \cite{Wu}). We have chosen a unified approach to both functional arguments by means of one-dimensional Hale functional, applied to $u$ and $z.$ Then the description becomes simple but sufficiently general. This approach, despite its simplicity, is surprisingly feasible in mathematical biology. It is worth mentioning that the results of our paper are new even for classical arguments. Our paper is the first work in which there are such unified Hale functionals for $u$ and $z.$
One can raise the question whether anything is missing when solutions $u,$ $z$ are considered in the subclass of continuous functions instead the whole space $L^{\infty}\cap L^1,$ while it is more natural to consider integrable densities $u$ in the biological modeling. The main reason of this restriction lies in the method of the proof where the class of continuous functions seems to be unavoidable. On the other hand, 
having proved existence results for continuous functions, we can consider problem (\ref{rr})--(\ref{zu}) with the initial function of the class $L^1.$ This initial function is approximated by continuous functions of the class $L^1.$ This gives a sequence of approximate solutions which converges weakly to the unique solution of our problem. Since our paper is very extensive and technical, we do not intend to provide any details of this corollary.

The aim of this paper is to look for Carath\'eodory's solutions to (\ref{rr})--(\ref{zu}), i.e.~continuous functions $u\colon E_a\to\rr_+$ which satisfy (\ref{rr}) almost everywhere on $E,$ their derivatives $\partial_tu,$ $\partial_xu$ exist almost everywhere on $E,$ and conditions (\ref{wp}), (\ref{zu}) hold. Condition (\ref{odn}) can be regarded as a definition of $z(t)$ by means of $u(t,\cdot).$ In the present paper we understand that the solutions to problem (\ref{rr})--(\ref{zu}) consist of pairs $(u,z)$ such that $u$ is the Carath\'eodory solution and $z$ is given by (\ref{zu}). We recall its biological interpretation: $(u,z)$ means the density and total size of the population. We focus on equivalent integral equations. 

The paper is organized as follows. In Section \ref{sect-bichar} we introduce bicha\-rac\-te\-ris\-tics of the hyperbolic equation and give their basic properties. We also formulate main assumptions and define the space of admissible functions, where the solution of problem (\ref{rr})--(\ref{zu}) will be found. In Section \ref{sect-twier} we prove the main existence and uniqueness theorem by virtue of the Banach contraction principle. The space of admissible functions is mapped into the same space. The integral operator is a contraction with respect to a Bielecki type norm. Because of the number of technical details the proofs of auxiliary lemmas are collected in the last section.

\section{Preliminaries}\label{sect-bichar}
We start with the formulation of characteristic equations and an analysis of $u$ along these characteristics. For a given continuous function $z\colon[-\tau,a]\to\rr_+,$ consider the characteristic equations for problem (\ref{rr})--(\ref{wp}):
\begin{equation}\label{rch} 
\frac{d}{ds}\eta(s)=c\left(s,\eta(s),z_s\right),\quad\eta(t)=x,
\end{equation}
where $(t,x)\in E.$
Let $\eta(\cdot)=\eta[z](\cdot;t,x)$ be the characteristic curve passing through the point $(t,x)\in E,$ i.e. the solution of (\ref{rch}) in the Carath\'eodory sense, cf \cite{Walter}.
We denote the maximal existence interval of $\eta[z](\cdot;t,x)$ by $[\alpha,a],$ where $\alpha=\alpha[z](t,x).$ 
It is clear that either $\alpha=0$ or $\alpha\in(0,a].$ 
If $\alpha=0,$ then the  characteristic curve starts from $(0,\eta(0)).$ 
If $\alpha>0,$ then it starts from $(\alpha,0).$
Equation (\ref{rr}) along a characteristic $\eta(\cdot)=\eta[z](\cdot;t,x)$ is rewritten in the form
\begin{equation}\label{rr-bich}
\frac{d}{ds}u(s,\eta(s))=u(s,\eta(s))\,\lambda(s,\eta(s),u_s(\cdot,\eta(s)),z_s)
\end{equation}
with the initial and boundary condition
\begin{equation*}\label{wp-b}
u(\alpha,\eta(\alpha))=\left\{
\begin{array}{cll}
\varphi(\alpha,\eta(\alpha)) &\mbox{for}& \alpha=0,\\[4pt]
\displaystyle\int_0^{\infty}K(\alpha,y)\,u_t(\alpha,y)\,dy &\mbox{for}& \alpha>0.
\end{array}\right.
\end{equation*}
If $\alpha=\alpha[z](t,x)=0$, then equation (\ref{rr-bich}) is  accompanied by the initial condition (\ref{wp}).
If $\alpha=\alpha[z](t,x)>0$, then (\ref{rr-bich}) is equipped with the boundary condition (\ref{odn}).
Denote by $\eta_0$ the characteristic which starts from $(0,0),$ i.e. $\eta_0(t)=\eta[z](t;0,0).$
\begin{remark}
The differential equation (\ref{rch}) leads to the integral equation 
\begin{equation}\label{rr-calch}
\eta[z](s;t,x)=x-\int_s^t c(\zeta,\eta[z](\zeta;t,x),z_\zeta)\,d\zeta.
\end{equation} 
\end{remark}

We denote by $C_b(X,Y)$ the space of all continuous and bounded functions. By $L^1(X,Y)$ we understand the space of all integrable functions with a natural $L^1$-norm, denoted by $\|\cdot\|_1.$ The symbol $\|\cdot\|$ stands for any supremum norm. We need the following assumptions.
\medskip

\begin{zalu} Suppose that:
\begin{enumerate}
\item 
$\varphi\in{}C_b\left(E_0,\rr_+\right),$ $\varphi(t,\cdot)\in L^1(\rr_+,\rr_+)$ for $t\in[-\tau,0],$ and the function 
$[-\tau,0]\ni t\longmapsto\int_0^\infty\varphi(t,x)\,dx$ is continuous and
$$\|\varphi\|_{\infty,1}:=\int_{0}^{\infty}\sup_{t'\in[-\tau,0]}\varphi(t',x)\,dx<\infty.$$
\item 
there is a constant $L_{\varphi}>0$ such that:
\[|\varphi(t,\bar x)-\varphi(t,x)|\leq L_{\varphi}|\bar x-x|\quad\mbox{on}\quad E_0.\]
\end{enumerate}
\end{zalu}
\medskip

\begin{zalc}
Suppose that:
\begin{enumerate}
\item 
the function $c\colon E\times{\mathcal C}_+\to\rr$ is bounded and measurable in $t\in[0,a]$ for every $(x,q)\in\rr_+\times{\mathcal C}_+,$  
\item 
$|c(t,\bar{x},\bar{q})-c(t,x,q)|\leq L_c(t)\left(|\bar{x}-x|+\|\bar{q}-q\|\right)$ on $E\times{\mathcal C}_+,$ where $L_c\in L^1([0,a],\rr_+),$
\item 
the function $c$ satisfies the estimates
\[\hat c(t)\geq \left\|c(t,{x},{q})\right\| \quad\mbox{on}\quad E\times{\mathcal C}_+\]
and
\[c(t,x,q)\geq \varepsilon_0\hat c(t)\quad\mbox{for}\quad 0\leq t\leq a,\quad 0\leq x\leq \int_0^t\hat{c}(s)\,ds\] 
with some $\varepsilon_0\in(0,1)$ and $\hat c\in L^1([0,a],\rr_+).$
\end{enumerate}
\end{zalc}
\medskip

Concerning Assumption  $[c,3],$ notice that the function $c$ must be strictly positive near the lateral boundary, in reach of characteristic curves which start from this boundary. The condition $c\geq 0$ is natural in mathematical biology, when $c$ describes ageing or maturation, see examples in \cite{GMC}, \cite{LMW}. In size-structured population models the renewal condition may occur not only at 0, but also at other points. All newborns have the same age $0,$ but not the same size.
Our general model refers to the classical Kermack-McKendrik-von Foerster model with $c=\mbox{const}>0,$ \cite{GMC1}. Assumption $[c,3]$ is not satisfied in the Lasota model (\cite{LMW}), where $c(t,x,q)=x,$ hence $c(t,0,q)=0.$ Our results can be generalized to the case of the nonlocal and nonlinear renewal condition 
$$c(t,0,z_t)u(t,0)=\int K(t,x,z_t)u_t(\cdot,x)dx,$$
see \cite{CS}.
Under our assumption $c(t,0,q)>0$ the coefficient $c(t,0,z_t)$ can be incorporated in $K,$ hence we can write it as follows
$$u(t,0)=\int K(t,x,z_t)u_t(\cdot,x)\,dx.$$
The results of our paper carry over to equation (\ref{rr}) with such renewal conditions. Due to the large number of details we omit the generalization.

\medskip

\begin{zall}
Suppose that the function $\lambda\colon E\times{\mathcal C}_+\times{\mathcal C}_+\to\rr$ satisfies the conditions:
\begin{enumerate}
\item 
$\lambda$ is Lebesgue integrable in $t\in[0,a]$ for every $(x,w,q)\in\rr_+\times{\mathcal C}_+\times{\mathcal C}_+,$ 
\item 
there is $L_\lambda\in L^1([0,a],\rr_+)$ such that 
$$|\lambda(t,\bar{x},\bar{w},\bar{q})-\lambda(t,x,w,q)|\leq L_\lambda(t)(|\bar{x}-x|+\|\bar{w}-w\|+\|\bar{q}-q\|)$$
for $t\in[0,a],$ $x,\bar x\in\rr_+,$ $w,\bar w,q,\bar q\in{\mathcal C}_+,$
\item 
there exists $M_\lambda\in{}L^1([0,a],\rr_+)$ such that
$$|\lambda(t,x,w,q)|\leq{}M_\lambda(t)\quad\mbox{for}\quad(t,x){}\in{}E,\,w,q{}\in{\mathcal C}_+.$$
\end{enumerate}
\end{zall}
\medskip

\noindent Denote
\begin{equation}\label{funW}
W(t,x,w,q)=\lambda(t,x,w,q)+\partial_x c(t,x,q)
\end{equation}
for $(t,x)\in E,$ $w,q\in{\mathcal C}_+.$

\medskip

\begin{zalW} Suppose that $W\colon E\times{\mathcal C}_+\times{\mathcal C}_+\to \rr$ satisfies the conditions:
\begin{enumerate}
\item 
$W$ is Lebesgue integrable in $t\in[0,a]$ for every $(x,w,q)\in\rr_+\times{\mathcal C}_+\times{\mathcal C}_+,$ 
\item 
there exists $L_W\in{}L^1\left([0,a],\rr_+\right)$ such that
\[ |W(t,\bar{x},\bar{w},\bar{q})-W(t,x,w,q)|\leq L_W(t)(|\bar{x}-x|+\|\bar{w}-w\|+\|\bar{q}-q\|)\]
for $t\in[0,a],$ $x,\bar x\in\rr_+,$ $w,\bar w,q,\bar q\in{\mathcal C}_+,$
\item 
there is $M_W\in{}L^1([0,a],\rr_+)$ such that
$$ |W(t,x,w,q)|\leq{}M_W(t)\quad \mbox{for}\quad(t,x)\in E,\, w,q\in{\mathcal C}_+.$$
\end{enumerate}
\end{zalW}
\medskip

\begin{zalK} Suppose that: 
\begin{enumerate}
\item 
$\|K(t,x)\|_{{\mathcal C}_{+}^\ast}\leq\kappa$ on $E$ for some constant $\kappa\in\rr_+,$ where $\|\cdot\|_{{\mathcal C}_{+}^\ast}$ is the standard functional norm, 
\item 
$K$ is absolutely continuous on $E$ in the following sense 
\[ \|K(\bar{t},\bar{x})-K(t,x)\|_{{\mathcal C}_{+}^{\ast}} \leq L_K\left[\left|\int_{t}^{\bar t}\hat c(s)\,ds\right|+|\bar x-x|\right] \]
with some $L_K\in\rr_+$ and the function $\hat c$ from Assumption $[c].$
\end{enumerate}
\end{zalK}

\begin{zalfun} The functions $L_c/\hat c,$ $L_\lambda/\hat c,$ $L_W/\hat c,$ $M_\lambda/\hat c,$ $M_W/\hat c$ are bounded on $[0,a].$
\end{zalfun}

\begin{remark}
The most convenient realization of Assumption $[L,M]$ can be achieved by the substitution $L_c=\mbox{const.}\,\hat c,$ $L_\lambda=\mbox{const.}\,\hat c,$ $L_W=\mbox{const.}\,\hat c,$ $M_\lambda=\mbox{const.}\,\hat c,$  $M_W=\mbox{const.}\,\hat c.$ In particular, one can put $L_c=L_\lambda=L_W=M_\lambda=M_W=\hat c.$
\end{remark}

\section{Main results}\label{sect-twier}

Now we are ready to define a space ${\mathcal X}$ of admissible functions in terms of constant from the previous assumptions, where a priori estimates of solutions to (\ref{rr})--(\ref{zu}) are fulfilled.
\begin{definition}\label{defX}
We say that a pair $(u,z)$ belongs to ${\mathcal X}$ iff $u\colon E_a\to\rr_+,$ $z\colon [-\tau,a]\to\rr_+$ are continuous and
\begin{enumerate}
\item $(u,z)$ satisfies conditions (\ref{wp}) and (\ref{zu}),
\item $u(t,0)\leq \kappa\, Z(t),$ $z(t)\leq Z(t),$ $u(t,x)\leq U(t)$ for $t\in[0,a]$ and $x\in\rr_+,$ where 
\begin{eqnarray*}
Z(t)&=&\|\varphi\|_{\infty,1}\,\exp\left(\int_0^t\left[\kappa\,\hat{c}(s)+M_W(s)\right]\,ds\right),\\
U(t)&=& \exp\left(\int_0^t M_\lambda(s)\,ds\right)\,\max\left\{\|\varphi\|,\kappa \,Z(t)\right\},
\end{eqnarray*}
\item $|u(\bar{t},0)-u(t,0)|\leq G_u\,\int_t^{\bar{t}}\hat{c}(s)\,ds$ for $0\leq t\leq\bar t\leq a,$ where 
\begin{multline*}
G_u=\Biggl\{{\kappa}^2 Z(a)+\left[2L_K+{\kappa}^2 Z(a)\left\|\frac{M_W}{\hat c}\right\|\right]\\
\times\left[\int_0^a\hat c(s)\,ds +\|\varphi\|_{\infty,1}\right]\Biggr\}\,\exp\left(\int_0^a M_W(s)\,ds\right),
\end{multline*}
\item $|u(t,\bar x)-u(t,x)| \leq L_u(t)\,|\bar x-x|$ for $t\in{}[0,a]$ and $x,\bar x\in{}\rr_+,$ where $L_u$ is given by
\begin{align*}
&L_u(t)=-1+(1+L_u(0))\\
&\times\exp\left\{\left[{\kappa} Z(a)+\|\varphi\|\right]\int_0^tL_\lambda(s)\,ds\,\exp\left(\int_0^a(L_c(s)+M_\lambda(s))\,ds\right)\right\},
\end{align*}
where $L_u(0)=\left\{L_{\varphi}+\frac{G_u+\kappa Z(a)\|M_\lambda/\hat{c}\|}{\varepsilon_0}\right\}\,\exp\left(\int_0^a\left(L_c(s)+M_\lambda(s)\right)\,ds\right).$
\end{enumerate}
\end{definition}
\begin{remark}
If $u\colon E_a\to\rr_+$ is a bounded, continuous and integrable function, then the corresponding function $z\colon[-\tau,a]\to\rr_+$ is measurable and bounded. However, for $(u,z)\in{\mathcal X}$ the function $z$ has an enhanced regularity, so that it becomes absolutely continuous on $[0,a].$ In fact, the function $z$ is as regular as $u(\cdot,0)$ on $[0,a].$
\end{remark}

Let us formulate the main result of this paper.

\begin{theorem}\label{pktstaly}
Suppose that Assumptions $[\varphi],$ $[c],$ $[\lambda],$ $[W],$ $[K]$ and $[L,M]$ are satisfied. Then there exists exactly one solution $(u,z)$ of problem (\ref{rr})--(\ref{zu}) in the class $\mathcal X.$
\end{theorem}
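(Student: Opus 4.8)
The plan is to recast the differential-functional problem \eqref{rr}--\eqref{zu} as a fixed-point problem for an integral operator $\mathcal{T}$ acting on the space $\mathcal{X}$, and then to apply the Banach contraction principle with a suitable Bielecki-type weighted norm. First I would write down, for a given pair $(u,z)\in\mathcal{X}$, the bicharacteristics $\eta[z](\cdot;t,x)$ from \eqref{rch}, which are well defined by the Carath\'eodory theory since $c$ is bounded and measurable in $t$ and Lipschitz in $(x,q)$; Assumption $[c,3]$ guarantees that characteristics emanating from the lateral boundary fill the relevant region and that the entry time $\alpha[z](t,x)$ depends continuously on $(t,x)$. Integrating \eqref{rr-bich} along characteristics yields
\begin{equation*}
(\mathcal{T}(u,z))(t,x) = u(\alpha,\eta(\alpha))\exp\!\left(\int_\alpha^t \lambda(s,\eta(s),u_s(\cdot,\eta(s)),z_s)\,ds\right),
\end{equation*}
where $u(\alpha,\eta(\alpha))$ equals $\varphi(\alpha,\eta(\alpha))$ if $\alpha=0$ and equals $\int_0^\infty K(\alpha,y)\,u_\alpha(\cdot,y)\,dy$ if $\alpha>0$; the second component of $\mathcal{T}$ is recovered from \eqref{zu}. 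One must check the consistency/compatibility at the corner so that $\mathcal{T}(u,z)$ is genuinely continuous on $E_a$.

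Next I would verify that $\mathcal{T}$ maps $\mathcal{X}$ into itself — this is where the a priori bounds encoded in Definition \ref{defX} come from. The pointwise bound $u(t,x)\le U(t)$ follows from Gr\"onwall applied to \eqref{rr-bich} using $|\lambda|\le M_\lambda$ together with the bound on $u(t,0)$; the bound $z(t)\le Z(t)$ comes from the change-of-variables identity $\int_0^\infty u(t,x)\,dx = \int \varphi\exp(\int_0^t W)\,dx + (\text{boundary influx})$, using $W=\lambda+\partial_x c$, $|W|\le M_W$, and $|K|\le\kappa$, which gives the exponential factor $\exp(\int_0^t[\kappa\hat c + M_W])$. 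The Lipschitz-in-$x$ estimate $|u(t,\bar x)-u(t,x)|\le L_u(t)|\bar x-x|$ is obtained by differentiating (or taking difference quotients of) the representation formula: one estimates the dependence of $\eta$, of $\alpha$, and of the exponential factor on $x$ using the Lipschitz assumptions $[c,2]$, $[\lambda,2]$, $[W,2]$, $[K,2]$, Assumption $[L,M]$ for the ratios, and $\varepsilon_0\hat c$ from $[c,3]$ to control $\partial_x\alpha$ (this is the origin of the $1/\varepsilon_0$ in $L_u(0)$). The regularity of $t\mapsto u(t,0)$, i.e. the constant $G_u$, follows by estimating the difference of the renewal integrals \eqref{odn} at two times $t,\bar t$, splitting into the variation of $K$ (controlled by $L_K$ and $\hat c$ via $[K,2]$), the variation of the support/characteristics, and the variation of $u$; here the $L^1$ control $\|\varphi\|_{\infty,1}$ and the $\mathcal{X}$-estimates already established are used. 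All these computations are lengthy but mechanical; as the paper indicates, they would be deferred to auxiliary lemmas.

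Finally I would prove that $\mathcal{T}$ is a contraction on $\mathcal{X}$ in a Bielecki-type norm of the form $\|(u,z)\|_* = \sup_{(t,x)} e^{-\Lambda \int_0^t \hat c(s)\,ds}\big(|u(t,x)| \vee \ldots\big)$ adapted simultaneously to the sup-norm and the $L^1$-norm in $x$, with $\Lambda$ large. Given two pairs $(u^{(1)},z^{(1)}),(u^{(2)},z^{(2)})\in\mathcal{X}$, one estimates $|\mathcal{T}(u^{(1)},z^{(1)})-\mathcal{T}(u^{(2)},z^{(2)})|$ by comparing characteristics (difference controlled by $L_c$ and by $\|z^{(1)}-z^{(2)}\|$ via Gr\"onwall), comparing entry times (using $\varepsilon_0\hat c$), comparing the boundary data (using $L_K$, $\kappa$, and the Lipschitz bound on the already-regular $u(\cdot,0)$), and comparing the exponential multipliers (using $L_\lambda$ and the $\mathcal{X}$-bounds on $u,z$ to linearize $u\lambda$). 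The weight $e^{-\Lambda\int_0^t\hat c}$ absorbs the accumulated $L^1$-in-$t$ factors, so that for $\Lambda$ sufficiently large the Lipschitz constant of $\mathcal{T}$ is $<1$; since $\mathcal{X}$ is a closed subset of a Banach space, the contraction principle yields a unique fixed point, which is the desired Carath\'eodory solution, and uniqueness within $\mathcal{X}$ is immediate.

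The main obstacle will be the renewal term \eqref{odn}: it couples the boundary value $u(t,0)$ to a nonlocal integral of $u$ over all $x$, and to close the estimates one needs $u(\cdot,0)$ itself to be regular (the constant $G_u$), which in turn feeds into $L_u(0)$ and hence into $L_u(t)$ — a genuine circularity that is resolved precisely by building these interlocking bounds into the definition of $\mathcal{X}$ and checking self-consistency. A second delicate point is handling $\partial_x c$ inside $W$: since only $c$, not $\partial_x c$, is assumed Lipschitz in $x$, one cannot differentiate characteristics twice, so the $L^1/L^\infty$ arguments for $z$ must go through the mass-transport identity rather than through pointwise manipulation of \eqref{rr}. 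Matching the two norms ($L^\infty$ and $L^1$ in $x$) in a single Bielecki norm, so that both the pointwise and the integrated estimates contract simultaneously, is the final technical hurdle.
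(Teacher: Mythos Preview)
Your overall architecture (integral reformulation along characteristics, invariance of $\mathcal{X}$, Bielecki contraction) matches the paper, but there is a genuine gap in how you set up the operator $\mathcal{T}$ at the boundary, and it is precisely the point the paper singles out as the crux of the argument.

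You define the boundary value of $\mathcal{T}(u,z)$ by plugging the \emph{old} $u$ into the renewal integral: $\tilde u(\alpha,0)=\int_0^\infty K(\alpha,y)\,u_\alpha(\cdot,y)\,dy$. With this choice the contraction step fails. Indeed, for two inputs $(u,z)$, $(\bar u,\bar z)$ one gets
\[
|\tilde{\bar u}(t,0)-\tilde u(t,0)|\le \kappa\int_0^\infty \|\bar u_t(\cdot,y)-u_t(\cdot,y)\|\,dy,
\]
and this carries \emph{no integral-in-$t$ factor} that a Bielecki weight $e^{-\Lambda\int_0^t\hat c}$ could make small; the resulting Lipschitz constant is essentially $\kappa$ times an $L^1$-type norm and cannot be pushed below $1$ by choosing $\Lambda$ large. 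The same defect obstructs the invariance check for condition~3 of Definition~\ref{defX}: bounding $|\tilde u(\bar t,0)-\tilde u(t,0)|$ from your formula requires $\int_0^\infty\|u_{\bar t}(\cdot,y)-u_t(\cdot,y)\|\,dy$, i.e.\ time regularity of the old $u$ at every $y$, which $\mathcal{X}$ does not supply (only $u(\cdot,0)$ has a modulus of continuity in $t$).

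The paper resolves this by letting $\tilde u(\cdot,0)$ be the solution of a \emph{Volterra} integral equation, namely~\eqref{rr-cal0}. This equation is obtained by imposing the renewal condition~\eqref{odn} on $\tilde u$ itself (not on the old $u$) and then changing variables $x\mapsto\xi=\alpha[z](t,x)$ and $x\mapsto y=\eta[z](0;t,x)$ along the characteristics, as explained in Remark~\ref{rem_warbrzeg}. The effect is that $\tilde u(t,0)$ is expressed through $\tilde u_\xi(\cdot,0)$ for $\xi<t$, through $\varphi$, and through the old $(u,z)$ only inside the exponential factors $\exp(\int W(P^{u,z}))$. This Volterra structure is what makes both steps go through: the time increment $|\tilde u(\bar t,0)-\tilde u(t,0)|$ can be bounded without any time regularity of the old $u$ (Step~5 of Lemma~\ref{war-4-5}), and in the contraction estimate every term comes with an $\int_0^t(\cdots)\,d\xi$ that the Bielecki weight absorbs (Step~4 of Lemma~\ref{zwez}). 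You correctly identified the renewal condition as ``the main obstacle'', but the missing idea is this inner Volterra reformulation of the boundary data.
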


Our main existence theorem will be proved by means of the Banach contraction principle in the space $\mathcal X$. Suppose that $(u,z)\in {\mathcal X}.$ 
We construct a new pair of functions $(\tilde{u},\tilde{z})$ via the renewal condition (\ref{odn}) as follows. Suppose that $\eta=\eta[z](\cdot;t,x)$ is a characteristic determined by the Cauchy problem (\ref{rch}). Denote
\[ P_{t,x}^{u,z}(s) :=
\left(s,\eta[z](s;t,x),u_s(\cdot,\eta[z](s;t,x)),z_s\right). \]
Let $\tilde u(t,0)$ for $t\in[0,a]$ be the solution of the following Volterra integral equation
\begin{equation}\label{rr-cal0}
\begin{split}
\tilde u(t,0)&= \int_0^t K(t,\eta[z](t;\xi,0))\,\tilde u_\xi(\cdot,0)\,c(\xi,0,z_\xi)\,\exp\left(\int_\xi^t W\left(P_{\xi,0}^{u,z}(s)\right)\,ds\right)\,d\xi\\
&\quad +\,\int^{\infty}_0 K(t,\eta[z](t;0,y))\,\varphi(\cdot,y)\,\exp{\left(\int_0^{t}W\left(P_{0,y}^{u,z}(s)\right)\,ds\right)}\,dy,
\end{split}
\end{equation}
where $W$ is defined by (\ref{funW}). The explanation of the changes of variables $x\mapsto \xi$ and $x\mapsto y$ is given in Remark \ref{rem_warbrzeg} at the very end of the paper.
Because the Volterra integral equation (\ref{rr-cal0}) provides a natural boundary condition, the function $\tilde u$ on the whole set $E$ will be the only solution of the PDE 
\begin{equation*}
\partial_t \tilde u(t,x)+c\left(t,x,z_t\right)\,\partial_x\tilde u(t,x)=\tilde u(t,x)\,\lambda\left(t,x,u_t(\cdot,x),z_t\right)
\end{equation*}
with the initial condition $\tilde u=\varphi$ on $E_0.$ Considering this problem along the characteristics satisfying (\ref{rch}), we get its solution by the explicit formula
\begin{equation}\label{rr-cal}
\tilde u(t,x)=\tilde u(\alpha,\eta[z](\alpha;t,x))\,\exp\left(\int_{\alpha}^t\lambda\left(P_{t,x}^{u,z}(s)\right)\,ds\right)
\end{equation}
for $(t,x)\in E,$ where $\alpha=\alpha[z](t,x).$
According to (\ref{zu}), we have 
$$\tilde z(t) =\int_{0}^{\infty} \tilde u(t,x)\,dx\quad\mbox{for}\quad t\in [-\tau,a].$$ 
This way we have constructed an integral operator $\mathcal T $ which maps a pair of functions $(u,z)$ to a pair $(\tilde u,\tilde z)=\mathcal{T}(u,z).$
By virtue of the Banach contraction principle we show that the operator $\mathcal T$ has exactly one fixed point $(u,z)\in \mathcal X.$ This fixed point satisfies the differential-functional problem (\ref{rr})--(\ref{zu}). This goal will be achieved in three auxiliary lemmas:
\begin{enumerate}
\item if $(u,z)\in \mathcal{X},$ then $\mathcal{T}(u,z)$ satisfies the conditions $1$--$2$ of Definition \ref{defX},
\item $\mathcal{T}(u,z)$ satisfies the conditions $3$--$4$ of Definition \ref{defX},
\item the operator $\mathcal{T}\colon\mathcal{X}\to\mathcal{X}$ is a contraction. 
\end{enumerate}
Because of multitudes of technical details we relegate the proofs of these lemmas to the next section.
\begin{lemma}\label{war-1-3} Suppose that Assumptions $[c,3],$ $[\lambda,3],$ $[W,3]$ and $[K,1]$ are satisfied.
If $(u,z)\in\mathcal X,$ then $\tilde u$ is bounded, continuous, the pair $(\tilde u,\tilde z)=\mathcal{T}(u,z)$ satisfies condition (\ref{zu}), and the following estimates hold true:
\[0\leq \tilde u(t,0)\leq \kappa Z(t),\quad \tilde z(t)\leq Z(t),\quad \tilde u(t,x)\leq U(t)\quad \mbox{for}\quad(t,x)\in E.\]
\end{lemma}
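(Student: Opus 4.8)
The plan is to verify the three claimed estimates in the order: first the bound on $\tilde u(t,0)$, then the bound on $\tilde u(t,x)$ off the boundary, and finally the bound on $\tilde z(t)$; continuity and integrability are treated along the way. First I would show that the Volterra integral equation \eqref{rr-cal0} for $\tilde u(\cdot,0)$ has a unique continuous solution on $[0,a]$: the kernel is bounded (using Assumption $[K,1]$, $\|K\|_{{\mathcal C}_+^\ast}\le\kappa$, Assumption $[c,3]$, $|c|\le\hat c$, and Assumption $[W,3]$, $|W|\le M_W$), and $L^1$ in $\xi$, so a standard Banach fixed-point argument on $C([0,a],\rr_+)$ applies; positivity of $\tilde u(\cdot,0)$ follows because the kernel and the inhomogeneous term $\int_0^\infty K(t,\eta[z](t;0,y))\,\varphi(\cdot,y)\,\exp(\cdots)\,dy$ are nonnegative (here $\varphi\ge0$ by Assumption $[\varphi]$ and $K$ is a positive functional). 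To get the upper bound $\tilde u(t,0)\le\kappa Z(t)$, I would estimate \eqref{rr-cal0} crudely: replace $\tilde u_\xi(\cdot,0)$ by $\sup_{[0,t]}\tilde u(\cdot,0)=:\tilde U_0(t)$, bound the first integral by $\kappa\,\tilde U_0(t)\int_0^t\hat c(\xi)\exp(\int_\xi^t M_W)\,d\xi$ and the second by $\kappa\,\|\varphi\|_{\infty,1}\exp(\int_0^t M_W)$, then read off a Gronwall-type inequality for $\tilde U_0$. Comparing the resulting integral inequality with the definition $Z(t)=\|\varphi\|_{\infty,1}\exp(\int_0^t[\kappa\hat c+M_W])$, one checks that $\kappa Z(t)$ is a supersolution, so $\tilde u(t,0)\le\kappa Z(t)$.

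Next, for $(t,x)\in E$ with $\alpha=\alpha[z](t,x)$, formula \eqref{rr-cal} gives $\tilde u(t,x)=\tilde u(\alpha,\eta(\alpha))\exp(\int_\alpha^t\lambda(P^{u,z}_{t,x}(s))\,ds)$. Using Assumption $[\lambda,3]$, $|\lambda|\le M_\lambda$, the exponential is $\le\exp(\int_0^t M_\lambda)$, and the initial value is $\tilde u(\alpha,\eta(\alpha))=\varphi(\alpha,\eta(\alpha))$ when $\alpha=0$ (so $\le\|\varphi\|$) or $\tilde u(\alpha,0)$ when $\alpha>0$ (so $\le\kappa Z(\alpha)\le\kappa Z(t)$ since $Z$ is nondecreasing). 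Hence $\tilde u(t,x)\le\exp(\int_0^t M_\lambda)\max\{\|\varphi\|,\kappa Z(t)\}=U(t)$, as required; continuity of $\tilde u$ on $E$ follows from continuity of $\eta[z]$, $\alpha[z]$, the boundary datum $\tilde u(\cdot,0)$, $\varphi$, and the integrand in the exponential (all of which are continuous under the stated assumptions, with the characteristics depending continuously on their starting data by the theory in \cite{Walter}).

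Finally, for $\tilde z(t)=\int_0^\infty\tilde u(t,x)\,dx$ I would perform the change of variables along characteristics that is alluded to after \eqref{rr-cal0}: writing $x=\eta[z](t;\cdot,\cdot)$ and using the Liouville/Jacobian identity (the factor $\partial_x c$ in $W=\lambda+\partial_x c$ accounting exactly for the Jacobian of $x\mapsto$ starting point), the integral $\int_0^\infty\tilde u(t,x)\,dx$ splits into a contribution from characteristics starting at $(0,y)$, namely $\int_0^\infty\varphi(\cdot,y)\exp(\int_0^t W(P^{u,z}_{0,y}(s))\,ds)\,dy$, plus a contribution from those starting at $(\xi,0)$, namely $\int_0^t \tilde u_\xi(\cdot,0)\,c(\xi,0,z_\xi)\exp(\int_\xi^t W(P^{u,z}_{\xi,0}(s))\,ds)\,d\xi$; here Assumption $[c,3]$ ($c\ge\varepsilon_0\hat c>0$ near the lateral boundary) guarantees that these characteristics actually sweep out all of $\rr_+$. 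Bounding $|W|\le M_W$, $|c|\le\hat c$, $\tilde u(\cdot,0)\le\kappa Z(\cdot)$, and using $\int_0^\infty\varphi(\cdot,y)\,dy\le\|\varphi\|_{\infty,1}$, the same Gronwall comparison as before shows $\tilde z(t)\le Z(t)$ (and in particular $\tilde z$ is finite, so $\tilde u(t,\cdot)\in L^1$); the continuity of $\tilde z$ follows from dominated convergence together with the continuity already established. I expect the main obstacle to be the rigorous justification of the change of variables defining $\tilde z$ — in particular, proving that the map from starting data to $x=\eta[z](t;\cdot,\cdot)$ is a bijection onto $\rr_+$ with the Jacobian $\exp(-\int\partial_x c)$, which is where Assumption $[c,3]$ and the Carathéodory theory of \eqref{rch} are genuinely needed; once that identity is in hand, all the estimates reduce to elementary Gronwall arguments matched against the explicit definitions of $Z$ and $U$.
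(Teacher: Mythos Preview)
Your proposal is correct and follows essentially the same route as the paper: bound $\tilde u(\cdot,0)$ from the Volterra equation \eqref{rr-cal0} via Gronwall, bound $\tilde u(t,x)$ from \eqref{rr-cal} by splitting on $\alpha=0$ versus $\alpha>0$, and bound $\tilde z$ from the change-of-variables representation (which the paper records as formula \eqref{rr-calz} in Remark~\ref{rem_warbrzeg}). The only minor differences are cosmetic: the paper treats $\tilde z$ before $\tilde u(t,x)$, and in the case $\alpha>0$ it expands $Z(\alpha)$ rather than using your simpler observation $Z(\alpha)\le Z(t)$; also, in your $\tilde z$-formula the integrands should be $\tilde u(\xi,0)$ and $\varphi(0,y)$ (pointwise values, not Hale functionals), though this does not affect the estimate.
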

\begin{lemma}\label{war-4-5} Suppose that Assumptions $[\varphi],$ $[c],$ $[\lambda],$ $[W],$ $[K]$ and $[L,M]$ are satisfied. If $(u,z)\in\mathcal X,$ then the pair $(\tilde u,\tilde z)=\mathcal{T}(u,z)$ satisfies the conditions 
\begin{align*}
|\tilde u(\bar t,0)-\tilde u(t,0)|&\leq G_u\int_t^{\bar t}\hat c (s)\,ds\quad\mbox{for} \quad 0\leq t\leq\bar t\leq a,\\
|\tilde u(t,\bar x)-\tilde u(t,x)&|\leq L_u(t)\,|\bar x-x| \quad\mbox{for}\quad t\in[0,a],\,\bar x,x\in\rr_+,
\end{align*}
where $G_u$ and $L_u(t)$ are the same as in Definition \ref{defX}, $3$--$4$.
 
\end{lemma}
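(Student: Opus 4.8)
The proof of Lemma~\ref{war-4-5} splits naturally into the two asserted estimates, and in each case the strategy is to differentiate, respectively subtract, the explicit representation formulas \eqref{rr-cal0} and \eqref{rr-cal} and then bound the resulting expressions using the a~priori bounds from Definition~\ref{defX} together with the Lipschitz and boundedness hypotheses in Assumptions~$[\varphi]$, $[c]$, $[\lambda]$, $[W]$, $[K]$ and $[L,M]$. For the time-regularity of $\tilde u(\cdot,0)$, I~would take $0\le t\le\bar t\le a$ and write $\tilde u(\bar t,0)-\tilde u(t,0)$ from \eqref{rr-cal0} as a sum of differences: (i) the difference coming from the kernel $K(\bar t,\cdot)-K(t,\cdot)$, controlled by $L_K$ via Assumption~$[K,2]$ (this is where the $2L_K$ term in $G_u$ originates, one $L_K$ for each of the two integrals in \eqref{rr-cal0}); (ii) the difference in the upper limit of the outer $d\xi$-integral and in the exponential factors $\exp(\int_\xi^{\bar t}W)-\exp(\int_\xi^t W)$ and $\exp(\int_0^{\bar t}W)-\exp(\int_0^t W)$, controlled by $M_W$ and the elementary inequality $|e^b-e^a|\le e^{\max\{a,b\}}|b-a|$; and (iii) the difference $\tilde u_\xi(\cdot,0)-$ itself, handled by a~Gronwall/Volterra argument since \eqref{rr-cal0} is a~Volterra equation for $\tilde u(\cdot,0)$. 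Along the way I~would use the mass bound $\int_0^\infty\varphi(\cdot,y)\,dy\le\|\varphi\|_{\infty,1}$ and the change-of-variables estimate that the image of the $y$-integral under $y\mapsto\eta[z](t;0,y)$ has ``length'' at most $\int_0^t\hat c(s)\,ds$, coming from $\hat c\ge\|c\|$ in Assumption~$[c,3]$; together these produce the bracket $[\int_0^a\hat c(s)\,ds+\|\varphi\|_{\infty,1}]$ in $G_u$. Collecting terms and invoking $\tilde u(\xi,0)\le\kappa Z(\xi)\le\kappa Z(a)$ from Lemma~\ref{war-1-3} yields precisely the constant $G_u$ after a~final application of Gronwall's inequality in the form $\exp(\int_0^a M_W)$.

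For the spatial Lipschitz estimate I~would use \eqref{rr-cal}. Fix $t$ and $x,\bar x\in\rr_+$; then $\tilde u(t,\bar x)-\tilde u(t,x)$ is the difference of two products of the form $\tilde u(\alpha,\eta(\alpha))\exp(\int_\alpha^t\lambda(P^{u,z}))$. One must separately control: the difference of the boundary/initial values $\tilde u(\bar\alpha,\eta[z](\bar\alpha;t,\bar x))-\tilde u(\alpha,\eta[z](\alpha;t,x))$; the difference of the exponential factors; and — the delicate point — the dependence of $\alpha=\alpha[z](t,x)$ on $x$. Here one uses that two characteristics $\eta[z](\cdot;t,x)$ and $\eta[z](\cdot;t,\bar x)$ stay within Lipschitz distance of one another via Gronwall applied to \eqref{rr-calch} with constant $L_c$, so $|\eta[z](s;t,\bar x)-\eta[z](s;t,x)|\le|\bar x-x|\exp(\int_0^a L_c)$; and that the ``foot points'' depend on $\bar x-x$ in a~controlled way. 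When both characteristics hit $t=0$, the boundary term is estimated by $L_\varphi$ composed with the characteristic Lipschitz bound; when both hit the lateral boundary $x=0$ at times $\alpha,\bar\alpha$, the difference $|\bar\alpha-\alpha|$ is bounded using the lower bound $c\ge\varepsilon_0\hat c$ from Assumption~$[c,3]$ (this is the source of the division by $\varepsilon_0$ in $L_u(0)$), and then $|\tilde u(\bar\alpha,0)-\tilde u(\alpha,0)|$ is bounded by $G_u\int_\alpha^{\bar\alpha}\hat c$ using the just-proved time-regularity, while the mixed case where one foot is on $t=0$ and the other on $x=0$ is handled by inserting the corner point $(\,\cdot\,,0)$ near $\eta_0$ and using continuity there. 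The exponential-factor difference contributes the integrand $L_\lambda(s)(|\eta$-difference$|+\|u_s$-difference$\|+\ldots)$, and since $(u,z)\in\mathcal X$ already gives $\|u_s(\cdot,\bar x)-u_s(\cdot,x)\|\le L_u(s)|\bar x-x|$ and $u\le U\le\kappa Z(a)+\|\varphi\|$, the whole estimate closes into a~linear differential inequality for $L_u$; solving it (a~Gronwall step) reproduces exactly the formula for $L_u(t)$ in Definition~\ref{defX}, part~4, with the stated $L_u(0)$.

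The bulk of the argument is bookkeeping: each of the two formulas \eqref{rr-cal0} and \eqref{rr-cal} is a~product/sum of several factors, and one expands a~difference of such expressions by the telescoping identity $A_1B_1-A_2B_2=(A_1-A_2)B_1+A_2(B_1-B_2)$, estimating each piece by one hypothesis. No new idea beyond Gronwall's inequality, the elementary bound on differences of exponentials, and the characteristic estimates from Section~\ref{sect-bichar} is needed. The main obstacle — and the only genuinely subtle point — is the dependence of the foot point $\alpha[z](t,x)$ and of the characteristic curve on the spatial variable $x$ in the lateral-boundary case: one must show that as $x$ varies the characteristic may switch from hitting $\{t=0\}$ to hitting $\{x=0\}$, and across that transition the two candidate boundary values must be glued consistently at the corner $(\,\cdot\,,0)=\eta_0$; the consistency condition $\varphi(0,0)=\int_0^\infty K(0,x)\varphi(\cdot,x)\,dx$ assumed throughout is exactly what makes this gluing Lipschitz rather than merely continuous. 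Controlling $|\bar\alpha-\alpha|$ quantitatively requires the strict positivity $c\ge\varepsilon_0\hat c$ near the boundary from Assumption~$[c,3]$, and it is precisely this step that forces the factor $1/\varepsilon_0$ into the constant $L_u(0)$; everything else is a~routine, if lengthy, estimation that I~would organize exactly as the decomposition of $G_u$ and $L_u$ in Definition~\ref{defX} suggests. Since the paper defers these computations to the final section, in the interest of space I~would present the decomposition and the three Gronwall steps in detail and relegate the term-by-term constant-chasing to that section.
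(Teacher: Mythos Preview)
Your outline is correct and matches the paper's proof almost step for step: the paper organizes the argument as (Steps~1--2) Gronwall estimates for $|\eta[z](s;t,\bar x)-\eta[z](s;t,x)|$ and $|\eta[z](s;\bar t,x)-\eta[z](s;t,x)|$, (Step~3) the $\varepsilon_0$-control of $\int_\alpha^{\bar\alpha}\hat c$, (Step~4) the resulting Lipschitz bound on $\lambda$ and $W$ along characteristics, then (Step~5) the direct estimate of $|\tilde u(\bar t,0)-\tilde u(t,0)|$ from \eqref{rr-cal0} and (Step~6) the spatial estimate from \eqref{rr-cal}, with the mixed case $\alpha=0<\bar\alpha$ handled by the intermediate point $x^\ast=\eta_0(t)$ exactly as you describe. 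One small correction: in Step~5 the paper does \emph{not} run a Gronwall/Volterra argument on the difference $\tilde u(\bar t,0)-\tilde u(t,0)$; since the same $\tilde u_\xi(\cdot,0)$ appears in both expressions, there is no self-referential term to absorb, and the estimate is purely direct using $\|\tilde u_\xi(\cdot,0)\|\le\kappa Z(\xi)$ from Lemma~\ref{war-1-3}; the factor $\exp(\int_0^a M_W)$ in $G_u$ comes from the exponential weights already in \eqref{rr-cal0}, not from a Gronwall step.
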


\begin{definition} The Bielecki norm is given by 
$$\|(u,z)\|_B=\max\{\|u/B\|,\|z/B\|\},$$ where $B=B(t),$ $B\colon\rr_+\to\rr_+$ is a positive, continuous, nondecreasing function. The meaning of the supremum norms $\|u/B\|$ and $\|z/B\|$ is obvious, see \cite{B}.
\end{definition}
\begin{lemma}\label{zwez}
Suppose that Assumptions $[\varphi],$ $[c],$ $[\lambda],$ $[W],$ $[K]$ and $[L,M]$ are satisfied. Then the operator ${\mathcal T}\colon{\mathcal X}\to\mathcal{X}$ is a contraction with respect to a~Bie\-lecki norm $\|\cdot\|_B$ for some $B\colon\rr_+\to\rr_+,$ that is: there is $\Theta\in(0,1)$ such that 
$$\|\mathcal{T}(\bar u,\bar z)-\mathcal{T}(u,z)\|_B\leq \Theta \,\|(\bar u,\bar z)-(u,z)\|_B\quad\mbox{on}\quad\mathcal{X}.$$
In fact, for any $\Theta\in(0,1)$ we can find a function $B$ of the form $B(t)=\exp\left(C\int_0^t\hat c(s)\,ds\right)$ such that the above contraction inequality holds true.
\end{lemma}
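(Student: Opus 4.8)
\noindent The plan is to apply the Banach contraction principle on $(\mathcal X,\|\cdot\|_B)$. First I would observe that $\mathcal X$ is a complete metric space: conditions 1--4 of Definition \ref{defX} are preserved under limits induced by $\|\cdot\|_B$, and on $[0,a]$ this norm is equivalent to the plain supremum norm (since $1\le B(t)\le\exp(C\int_0^a\hat c)$), so $\mathcal X$ is a closed subset of $C_b(E_a,\rr_+)\times C_b([-\tau,a],\rr_+)$. Together with Lemmas \ref{war-1-3} and \ref{war-4-5}, which give $\mathcal T(\mathcal X)\subseteq\mathcal X$, the present lemma then yields Theorem \ref{pktstaly}. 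Throughout, $\mathrm{const}$ denotes a positive number depending only on $a$ and on the constants of Assumptions $[\varphi]$--$[L,M]$, but not on $C$ nor on the chosen pairs; the decisive mechanism is the elementary inequality $\int_0^t\hat c(s)B(s)\,ds\le\frac1C\big(B(t)-1\big)\le\frac1C B(t)$ valid for $B(t)=\exp\big(C\int_0^t\hat c\big)$.

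Fix $(u,z),(\bar u,\bar z)\in\mathcal X$, put $(\tilde u,\tilde z)=\mathcal T(u,z)$, $(\tilde{\bar u},\tilde{\bar z})=\mathcal T(\bar u,\bar z)$ and write $\delta:=\|(\bar u-u,\bar z-z)\|_B$. The first step is a Gronwall estimate for the characteristics: subtracting the integral equations (\ref{rr-calch}) for $\eta[z]$ and $\eta[\bar z]$, using Assumption $[c,2]$ and Gronwall's inequality, one gets
$$ \big|\eta[z](s;t,x)-\eta[\bar z](s;t,x)\big|\le \mathrm{const}\,\Big(\int_s^t\hat c(\zeta)\,d\zeta\Big)\,\sup_{\zeta\in[s,t]}\|z_\zeta-\bar z_\zeta\| $$
(here $L_c/\hat c$ is bounded by Assumption $[L,M]$), together with a companion estimate for $\big|\alpha[z](t,x)-\alpha[\bar z](t,x)\big|$ obtained by dividing a difference of hitting positions by the lower bound $\varepsilon_0\hat c$ of Assumption $[c,3]$. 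Since $z=\bar z$ on $[-\tau,0]$ (both pairs satisfy (\ref{zu}) with the same $\varphi$) and $\|z_\zeta-\bar z_\zeta\|\le B(\zeta)\,\delta$ on $[0,a]$, all these quantities are $\le\frac{\mathrm{const}}{C}B(t)\,\delta$.

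The second — and hardest — step treats the boundary values through the Volterra equation (\ref{rr-cal0}). Subtracting the equation for $\tilde{\bar u}(t,0)$ from that for $\tilde u(t,0)$, I would split the difference according to its four sources of variation: the kernel $K$ evaluated at different characteristic feet (Assumption $[K,2]$ plus the characteristic estimate), the $W$-exponentials (Assumptions $[W,2]$, $[W,3]$, the characteristic estimate, and $|u_s-\bar u_s|$, $|z_s-\bar z_s|$), the speed $c(\xi,0,z_\xi)$ (Assumption $[c,2]$), and the explicit occurrence of $\tilde u_\xi(\cdot,0)$ versus $\tilde{\bar u}_\xi(\cdot,0)$ in the first integral. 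Using the a priori bounds $Z,U$ of $\mathcal X$, the mass bound $\|\varphi\|_{\infty,1}$ and the weight inequality above, the first three groups are dominated by $\frac{\mathrm{const}}{C}B(t)\,\delta$ plus a term $\mathrm{const}\int_0^t\hat c(\xi)B(\xi)\,d\xi\cdot\|(\tilde u-\tilde{\bar u})(\cdot,0)/B\|$, while the fourth group is of Volterra type in the unknown $\|(\tilde u-\tilde{\bar u})(\cdot,0)\|$; a Gronwall argument in $t$ then yields $\big|\tilde u(t,0)-\tilde{\bar u}(t,0)\big|\le\frac{\mathrm{const}}{C}B(t)\,\delta$.

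Finally, the representation (\ref{rr-cal}) propagates this bound to all of $E$: the difference $\tilde u(t,x)-\tilde{\bar u}(t,x)$ is a difference of boundary/initial data at the two distinct feet — absorbed by the previous step, by Assumption $[\varphi,2]$, and by the bounds $L_u$ and $G_u$ of $\mathcal X$ used to move between nearby feet — multiplied by $\lambda$-exponentials whose difference is handled via Assumptions $[\lambda,2]$, $[\lambda,3]$ and the characteristic estimate; the outcome is $\big|\tilde u(t,x)-\tilde{\bar u}(t,x)\big|\le\frac{\mathrm{const}}{C}B(t)\,\delta$ uniformly in $x$. Integrating in $x$, with tails controlled by the common integrable majorant produced in Lemma \ref{war-1-3} (so that the pointwise bound suffices), gives the same estimate for $|\tilde z(t)-\tilde{\bar z}(t)|$, hence $\|\mathcal T(\bar u,\bar z)-\mathcal T(u,z)\|_B\le\frac{\mathrm{const}}{C}\,\delta$; choosing $C$ large makes $\Theta:=\frac{\mathrm{const}}{C}$ as small as desired in $(0,1)$. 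I expect Step two to be the genuine obstacle: the renewal term is implicit in $\tilde u(\cdot,0)$, it couples the $u$- and $z$-dependence, and the changes of variables $x\mapsto\xi$, $x\mapsto y$ in (\ref{rr-cal0}) introduce Jacobians that are under control only because Assumption $[c,3]$ keeps $c$ bounded below along the boundary characteristics; carrying all of this with constants uniform over $\mathcal X$, simultaneously in $L^\infty$ and $L^1$ in $x$, is where essentially all the technical work lies, which is why the paper defers it to the final section.
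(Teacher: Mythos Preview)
Your overall architecture matches the paper's: Gronwall for characteristics and for $\alpha$, a Volterra/Gronwall argument on (\ref{rr-cal0}) for $\tilde u(\cdot,0)$, then propagation along characteristics via (\ref{rr-cal}), and finally a Bielecki weight $B(t)=\exp\big(C\int_0^t\hat c\big)$ turning $\int_0^t\hat c\,B$ into $\frac{1}{C}B(t)$. Two points, however, need correction.

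\medskip
\textbf{The estimate for $\tilde z-\tilde{\bar z}$.} Your sentence ``Integrating in $x$, with tails controlled by the common integrable majorant produced in Lemma~\ref{war-1-3} (so that the pointwise bound suffices)'' does not work. The pointwise bound $|\tilde u(t,x)-\tilde{\bar u}(t,x)|\le \frac{\mathrm{const}}{C}B(t)\delta$ is \emph{uniform} in $x$, and the integration domain is $[0,\infty)$; a common $L^1$ majorant for $\tilde u,\tilde{\bar u}$ gives only $|\tilde u-\tilde{\bar u}|\le \min\big(\frac{\mathrm{const}}{C}B(t)\delta,\,2M(x)\big)$, whose $L^1$-norm stays bounded but does \emph{not} shrink with $C$. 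Concretely, on the tail where both $\alpha$'s vanish one of the difference terms is $\big[\varphi(0,\eta[z](0;t,x))-\varphi(0,\eta[\bar z](0;t,x))\big]\exp(\cdots)$, and the Lipschitz estimate of $\varphi$ only yields a uniform bound, not an integrable one. The paper does not integrate the pointwise bound at all: it uses the representation (\ref{rr-calz}) for $\tilde z$ obtained by the change of variables $x\mapsto\xi=\alpha[z](t,x)$, $x\mapsto y=\eta[z](0;t,x)$, which brings in the Jacobian and replaces $\lambda$ by $W=\lambda+\partial_xc$. After this change the second integral carries the fixed integrable weight $\varphi(0,y)$, and the difference of the $W$-exponentials is uniformly small; the first integral is over $[0,t]$ with weight $\hat c(\xi)$. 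One then gets $|\tilde z(t)-\tilde{\bar z}(t)|\le C_1\,\|(\bar u-u,\bar z-z)\|_B\int_0^t\hat c\,B$ directly, in parallel with the $\tilde u(\cdot,0)$ estimate. This is the step where Assumption~$[W]$ is actually used, and it is the reason the function $W$ is introduced in the first place.

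\medskip
\textbf{The mixed case for $\tilde u(t,x)$.} In propagating via (\ref{rr-cal}) you silently assume the two feet are of the same type. When $\alpha[z](t,x)=0$ but $\alpha[\bar z](t,x)>0$, one foot lies on $\{0\}\times\rr_+$ and the other on $[0,a]\times\{0\}$; the constants $L_u,G_u$ alone do not bridge them, because Step~2 of the $\alpha$-estimate uses $c\ge\varepsilon_0\hat c$ only on the lateral strip and presupposes \emph{both} characteristics hit $x=0$. The paper handles this by interpolating: for $(u_\theta,z_\theta)=\theta(u,z)+(1-\theta)(\bar u,\bar z)$ there is, by continuity, a $\theta$ with $\eta[z_\theta](0;t,x)=0$ and $\alpha[z_\theta](t,x)=0$, and one splits $\tilde{\bar u}-\tilde u$ through $\mathcal T(u_\theta,z_\theta)$, reducing to the two pure cases.
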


\begin{corollary}\label{stalelip}
If the functions $\lambda,$ $c,$ $\partial_x c$ are bounded and continuous; $\lambda$ is Lipschitz continuous in $x,p,q$;  $c$ and $\partial_x c$ are Lipschitz continuous in $x,q;$ $K$ is nonnegative and Lipschitz continuous; $c$ is nonnegative on $E$; $c(t,x,q)\geq \varepsilon_1>0$ for all $t\in [0,a]$ and $x\in[0,tx_0/a]$ with some $x_0>0$; $\varphi$ is Lipschitz continuous and satisfies Assumption $[\varphi]$, then there is a solution to (\ref{rr})--(\ref{zu}) which is Lipschitz continuous with respect to both variables $t,$ $x.$
\end{corollary}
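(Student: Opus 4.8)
The plan is to deduce the corollary directly from Theorem~\ref{pktstaly}: I would check that the (more transparent) hypotheses listed here force all of Assumptions $[\varphi]$, $[c]$, $[\lambda]$, $[W]$, $[K]$ and $[L,M]$, invoke the theorem to obtain a solution $(u,z)\in\mathcal X$, and then read the Lipschitz regularity of $u$ off the defining properties of $\mathcal X$ together with the characteristic representation~(\ref{rr-cal}).

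For the first step, since $c$, $\lambda$ and $\partial_x c$ are bounded one may take for $\hat c$ the constant $\hat c\equiv\sup_{E\times{\mathcal C}_+}|c|$ and, likewise, constants for $L_c,L_\lambda,L_W,M_\lambda,M_W$ provided by the respective Lipschitz constants and sup-bounds. Then $[c,1]$, $[\lambda,1]$, $[W,1]$ follow from continuity in $t$ (in particular measurability); $[c,2]$, $[\lambda,2]$ and — because $W=\lambda+\partial_x c$ with $\partial_x c$ independent of $w$ — $[W,2]$ follow from the Lipschitz hypotheses; $[c,3]$ holds with $\varepsilon_0=\varepsilon_1/\hat c$, the lower bound being available on $\{x\le\hat c\,t\}$ by the cone positivity hypothesis (this is where the compatibility of $\hat c$ with the constant $x_0$ has to be arranged); $[\lambda,3]$ and $[W,3]$ are exactly the boundedness of $\lambda$ and of $W$; $[\varphi]$ is assumed outright; $[K]$ is the boundedness and Lipschitz continuity of $K$ (with $\hat c$ constant, $|\int_t^{\bar t}\hat c|=\hat c\,|\bar t-t|$, so $[K,2]$ becomes plain Lipschitz continuity in $(t,x)$); and $[L,M]$ is immediate since all the quotients $L_c/\hat c,\dots,M_W/\hat c$ are now constants. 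Hence Theorem~\ref{pktstaly} applies and yields a unique $(u,z)\in\mathcal X$.

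For the second step I would note that condition~$4$ of Definition~\ref{defX}, together with boundedness of $L_u$ on $[0,a]$ (an exponential of integrals of $L^1$ functions), already gives that $u$ is Lipschitz in $x$, uniformly in $t$; and condition~$3$ of Definition~\ref{defX} together with boundedness of $\hat c$ gives that the trace $t\mapsto u(t,0)$ is Lipschitz, while $u(0,\cdot)=\varphi(0,\cdot)$ is Lipschitz by hypothesis. To propagate this into $E$, fix the solution's $z$. The characteristic map $(s;t,x)\mapsto\eta[z](s;t,x)$ is Lipschitz by elementary ODE estimates (its field is bounded by $\hat c$ and Lipschitz in the space variable with constant $L_c$, with $z$ frozen), and the entry time $\alpha[z](t,x)$ is Lipschitz in $(t,x)$ because a characteristic reaching the lateral boundary does so transversally, with speed $\ge\varepsilon_1>0$, by Assumption~$[c,3]$. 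Consequently, in~(\ref{rr-cal}), the foot value $u\bigl(\alpha[z](t,x),\eta[z](\alpha[z](t,x);t,x)\bigr)$ — equal to $u\bigl(\alpha[z](t,x),0\bigr)$ where $\alpha>0$ and to $\varphi\bigl(0,\eta[z](0;t,x)\bigr)$ where $\alpha=0$, the two branches matching on the overlap — is a Lipschitz function of $(t,x)$, and the integrand $\lambda\bigl(P_{t,x}^{u,z}(s)\bigr)$ is bounded and Lipschitz in $(t,x)$ uniformly in $s$ (using boundedness and Lipschitz continuity of $\lambda$ and the uniform Lipschitz-in-$x$ bound on $u$ to control $u_s(\cdot,\eta[z](s;t,x))$), so that $\exp\bigl(\int_{\alpha}^t\lambda(P_{t,x}^{u,z}(s))\,ds\bigr)$ is Lipschitz in $(t,x)$. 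As a product of two bounded Lipschitz factors, $u$ is Lipschitz on $E$; since it coincides with the Lipschitz function $\varphi$ on $[-\tau,0]\times\rr_+$ and the two pieces agree at $t=0$, it is Lipschitz on all of $E_a$. Finally $z$, being (by the remark following Definition~\ref{defX}) as regular as $u(\cdot,0)$ on $[0,a]$, is absolutely continuous, hence — again by boundedness of $\hat c$ — Lipschitz.

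I expect the real work to lie in the propagation step: establishing the Lipschitz dependence of the entry time $\alpha[z](t,x)$ on $(t,x)$ — where Assumption~$[c,3]$ is indispensable, the positivity $c\ge\varepsilon_1$ being precisely what keeps $\alpha[z]$ from having a square-root-type singularity — and checking Lipschitz continuity cleanly across the corner $\{t=0\}\cap\{x=0\}$ where the initial and boundary branches meet. The verification of the assumptions is routine bookkeeping, the only slightly delicate point there being the choice of $\hat c$ relative to the positivity cone in~$[c,3]$.
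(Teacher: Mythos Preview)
Your proposal is correct and follows essentially the same route as the paper's own proof: verify that all the assumptions of Theorem~\ref{pktstaly} hold with constant choices of $\hat c,L_c,L_\lambda,L_W,M_\lambda,M_W$ (noting $\varepsilon_0=\varepsilon_1/\hat c$ for $[c,3]$), and then read off Lipschitz continuity of $u$ from the representation~(\ref{rr-cal}) once $\alpha[z]$, $\eta[z]$ and $u(\cdot,0)$ are seen to be Lipschitz in $t$. Your write-up is in fact more detailed than the paper's (which dispatches the propagation step in a single sentence), and you correctly flag the one genuinely delicate point---arranging the constant $\hat c$ so that the cone $\{x\le\hat c\,t\}$ in $[c,3]$ sits inside the positivity region $\{x\le tx_0/a\}$.
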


\begin{proof}
All assumptions of Theorem \ref{pktstaly} are satisfied with the functions $M_\lambda,$ $M_W,$ $L_\lambda,$ $L_W,$ $\hat c,$ $L_c$ that are constant. Observe that $\varepsilon_0=\varepsilon_1/{\hat c}$ that is Assumption $[c,3]$ is also satisfied. Since the functions mentioned above are constant, the functions $\alpha[z],$ $\eta[z]$ and $u(\cdot,0)$ inherit the Lipschitz continuity with respect to $t.$
Therefore, using (\ref{rr-cal}), we obtain that $u$ is Lipschitz continuous.
\end{proof}

\begin{corollary}
Suppose that the assumptions of Corollary \ref{stalelip} are satisfied. If $\varphi,$ $c$ and $K$ are $C^1$ functions, then the solution of (\ref{rr})--(\ref{zu}) is a $C^1$ function in the whole domain $E,$ except the characteristic curve which starts from $(0,0).$
\end{corollary}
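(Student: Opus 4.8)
The plan is to bootstrap from the already-established Lipschitz regularity of the solution $(u,z)$ in Corollary \ref{stalelip} to $C^1$ regularity under the additional smoothness of the data. First I would note that, since $c$ is now $C^1$ in $(t,x,q)$ and $z$ is Lipschitz (hence $z_s$ depends Lipschitz-continuously on $s$ in $\mathcal{C}_+$), the right-hand side of the characteristic equation \eqref{rch} is $C^1$ in $x$ and at least Lipschitz in $s$; by the classical theorem on differentiable dependence of ODE solutions on initial data (cf.\ \cite{Walter}), the map $(t,x)\mapsto\eta[z](s;t,x)$ is $C^1$ jointly, and likewise the hitting time $\alpha[z](t,x)$ is $C^1$ wherever $\alpha>0$ (by the implicit function theorem applied to $\eta[z](\alpha;t,x)=0$, using $c(\alpha,0,z_\alpha)=\dot\eta(\alpha)\neq 0$, which holds by Assumption $[c,3]$ since $\varepsilon_1>0$). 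The only place where this breaks down is along $\eta_0$, the characteristic through $(0,0)$, which separates the region reached from the initial line $\{t=0\}$ from the region reached from the lateral boundary $\{x=0\}$: there $\alpha$ jumps from $0$ to positive values and is not differentiable.

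Next I would treat the boundary trace $u(\cdot,0)$. From Lemma \ref{war-4-5} we already know $u(\cdot,0)$ is Lipschitz; to upgrade to $C^1$ I would differentiate the Volterra equation \eqref{rr-cal0}. With $K$, $\varphi$, $c$ all $C^1$ and with $W=\lambda+\partial_x c$ now continuous (indeed Lipschitz in its spatial arguments), the two integrands in \eqref{rr-cal0} are $C^1$ in the upper limit $t$ — here one uses that $\eta[z](t;\xi,0)$ and $\eta[z](t;0,y)$ and the exponential weights $\exp(\int W)$ are all $C^1$ in $t$, and that the boundary term at $\xi=t$ contributes $K(t,0)\tilde u(t,0)c(t,0,z_t)$, which is continuous. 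This yields a linear Volterra equation for $\frac{d}{dt}u(t,0)$ with continuous kernel, whose solution is therefore continuous; hence $u(\cdot,0)\in C^1[0,a]$. (One should also check the consistency of the one-sided derivatives at $t=0$ using the consistency condition on $\varphi$, $K$; this is where the ``except along $\eta_0$'' caveat enters for the full two-variable statement.)

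Finally I would combine these ingredients in the explicit representation \eqref{rr-cal}: on the open region $\alpha[z](t,x)=0$ we have $\tilde u(t,x)=\varphi(0,\eta[z](0;t,x))\exp(\int_0^t\lambda(P^{u,z}_{t,x}(s))\,ds)$, and on the open region $\alpha[z](t,x)>0$ we have $\tilde u(t,x)=u(\alpha,0)\exp(\int_\alpha^t\lambda\,ds)$; in each region every factor is now a composition of $C^1$ maps — $\varphi\in C^1$, the characteristic flow is $C^1$, $\alpha$ is $C^1$, $u(\cdot,0)\in C^1$, and the exponent is $C^1$ because $\lambda$ is $C^1$ in $(x,w,q)$ and the path $s\mapsto P^{u,z}_{t,x}(s)$ depends $C^1$ on $(t,x)$ (the $u_s(\cdot,\eta)$ component is handled by the already-known Lipschitz/$C^1$ regularity of $u$ itself, bootstrapped: once $u$ is $C^1$ away from $\eta_0$, the functional argument is $C^1$ there too). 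Patching the two regions, $u$ is $C^1$ on $E\setminus\eta_0$.

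The main obstacle I expect is the self-referential structure: the exponent in \eqref{rr-cal} contains $u_s(\cdot,\eta[z](s;t,x))$, so differentiability of $u$ away from $\eta_0$ is needed in order to prove differentiability of $u$ away from $\eta_0$. I would resolve this by a fixed-point/iteration argument inside the $C^1$ (away from $\eta_0$) subclass of $\mathcal X$: show that $\mathcal T$ maps this subclass into itself and inherits the contraction from Lemma \ref{zwez} in the Bielecki norm (or a $C^1$-version thereof), so that the unique fixed point from Theorem \ref{pktstaly} automatically lies in this smaller class. The second, more delicate point is the behaviour precisely on $\eta_0$: there the two representation formulas meet, $\alpha$ loses differentiability, and generically $\partial_x u$ has a jump unless an extra compatibility condition links $\varphi$, $K$ and $c$ at the corner $(0,0)$ — which is exactly why the statement excludes that curve.
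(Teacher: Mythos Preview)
Your proposal is correct and follows essentially the same approach as the paper: split the domain along $\eta_0$, invoke the representation \eqref{rr-cal} with $\varphi\in C^1$ on the region $x>\eta_0(t)$, and use the Volterra equation \eqref{rr-cal0} together with $K\in C^1$ and the $C^1$ regularity of $\alpha[z]$ (coming from the smoothness of $c$) on the region $x<\eta_0(t)$. Your treatment is in fact more careful than the paper's brief sketch---you make explicit the implicit-function argument for $\alpha$, the differentiation of \eqref{rr-cal0} yielding $u(\cdot,0)\in C^1$, and the self-referential bootstrapping through $\mathcal T$---but the underlying strategy is identical.
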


\begin{proof}
Since the initial function $\varphi$ is of the class $C^1$ and $(\tilde u,\tilde z)={\mathcal T}(u,z)\in{\mathcal X},$ it follows from (\ref{rr-cal}) that $\tilde u$ is of the class $C^1$ for $(t,x)$ such that $x>\eta_0(t),$ $t\in[0,a].$ Due to the Volterra integral equation (\ref{rr-cal0}), $C^1$--regularity of $K$ results in the same regularity of $\tilde u$ for $x<\eta_0(t),$ $t\in[0,a],$ provided that $\alpha[z]$ is $C^1,$ which follows from the smoothness property of the function $c.$
\end{proof}

\section{Proofs of Lemmas}
\begin{proof}[Proof of Lemma \ref{war-1-3}]
Let $t\in[0,a].$ It follows from the Volterra equation (\ref{rr-cal0}) that
\begin{eqnarray*}
\tilde{u}(t,0)&\leq&\kappa\int_0^t \hat c(\xi)\,\left\|\tilde u_\xi(\cdot,0)\right\|\,\exp\left(\int_\xi^t M_W(s)ds\right)\,d\xi\\
&&+\kappa\,\|\varphi\|_{\infty,1}\,\exp\left(\int_0^t M_W(s)\,ds\right).
\end{eqnarray*}
Since the right-hand side is increasing with respect to $t,$ the left-hand side can be replaced by $\|\tilde{u}_t(\cdot,0)\|.$
Applying the Gronwall lemma, we get the inequality
\begin{equation*}
\|\tilde{u}_t(\cdot,0)\|\,\exp\left(-\int_0^t M_W(s)\,ds\right) \leq \kappa\,\|\varphi\|_{\infty,1}\,\exp\left(\int_0^t \kappa\hat c(s)\,ds\right).
\end{equation*}
Therefore, we have $\tilde u(t,0)\leq \kappa Z(t),$ where $Z(t)$ is described in Definition \ref{defX}, $2.$ Since $\tilde z$ has the estimate
\begin{equation*}
\tilde z(t)\leq\int_0^t\kappa\,Z(\xi)\,\hat c(\xi)\,\exp\left(\int_\xi^t M_W(s)\,ds\right)\,d\xi+\|\varphi\|_{\infty,1}\,\exp\left(\int_0^t M_W(s)\,ds\right),
\end{equation*}
it is easy to observe that $\tilde z(t)\leq Z(t).$

Now we show the estimate for $\tilde u(t,x).$ From (\ref{rr-cal}) we deduce that\\
$1)$ if $x\geq\eta_0(t),$ then 
\begin{equation*}
\tilde u(t,x)=\varphi(0,\eta[z](0;t,x))\exp\left(\int_0^t\lambda\left(P_{t,x}^{u,z}(s)\right)ds\right)\leq \|\varphi\|\,\exp\left(\int_0^t M_\lambda(s)\,ds\right),
\end{equation*}
$2)$ if $x\leq\eta_0(t),$ then 
\begin{eqnarray*}
\tilde u(t,x)&=&u(\alpha,0)\,\exp\left(\int_\alpha^t\lambda\left(P_{t,x}^{u,z}(s)\right)\,ds\right)\\
&\leq&\kappa\,Z(\alpha)\,\exp\left(\int_\alpha^t M_\lambda(s)\,ds\right)\\
&=&\kappa\,\|\varphi\|_{\infty,1}\,\exp\left(\int_0^\alpha \left[\kappa\,\hat c(s)+M_W(s)\right]\,ds\right)\,\exp\left(\int_\alpha^t M_\lambda(s)\,ds\right)\\
&\leq& \kappa\,\|\varphi\|_{\infty,1}\,\exp\left(\int_0^t \max\left\{\kappa\,\hat c(s)+M_W(s),\,M_\lambda(s)\right\}\,ds\right),
\end{eqnarray*}
where $\alpha=\alpha[z](t,x).$
Both estimates in cases $1)$ and $2)$ can be unified as follows
\begin{eqnarray*}
\tilde u(t,x)&\leq&\max\left\{\|\varphi\|,\,\kappa\,\|\varphi\|_{\infty,1}\right\}\\
&&\times\exp\left(\int_0^t \max\left\{\kappa\,\hat c(s)+M_W(s),\,M_\lambda(s)\right\}\,ds\right),
\end{eqnarray*}
hence $\tilde u(t,x)\leq U(t)$ on $E.$ Regularity assertions and condition (\ref{zu}) for $\tilde u,$ $\tilde z$ are trivial. This completes the proof.
\end{proof}
\medskip

\begin{proof}[Proof of Lemma \ref{war-4-5}]
Let $(u,z)\in \mathcal{X}.$ In order to demonstrate the Lipschitz condition of $\tilde u(t,\cdot)$ and the absolute continuity of $\tilde u(\cdot,0),$ we analyze properties of $\eta[z]$ and $\alpha[z].$ Take arbitrary $(t,x)\in E$ and $\bar x\in\rr_+.$\\
{\it Step 1.} \emph{Estimate of increments of $\eta$ for $\bar x$ and $x.$} We derive from (\ref{rr-calch}) the integral inequality
\[ |\eta[z](s;t,\bar x)-\eta[z](s;t,x)| \leq |\bar x-x| + \left|\int_s^t L_c(\zeta)\,\left|\eta[z](\zeta;t,\bar x)-\eta[z](\zeta;t,x)\right|\,d\zeta\right|.\]
Applying the Gronwall lemma, we get
\[ |\eta[z](s;t,\bar x)-\eta[z](s;t,x)| \leq |\bar x-x|\,\exp\left(\left|\int_s^t L_c(\zeta)\,d\zeta\right|\right).\]
{\it Step 2.} \emph{Estimate of increments of $\eta$ for $\bar t$ and $t.$} From (\ref{rr-calch}) we get the inequality 
\[
|\eta[z](s;\bar t,x)-\eta[z](s;t,x)|\leq \int_t^{\bar t}\hat c(\zeta)\,d\zeta +\left|\int_s^t \!L_c(\zeta)\left|\eta[z](\zeta;\bar t,x)-\eta[z](\zeta;t,x)\right|\,d\zeta\right|
\] 
for $0\leq t\leq \bar t \leq a.$
Applying the Gronwall lemma, we get 
\[|\eta[z](s;\bar t,x)-\eta[z](s;t,x)| \leq \int_t^{\bar t}\hat c(\zeta)\,d\zeta\,\exp\left(\left|\int_s^tL_c(\zeta)\,d\zeta\right|\right).\]
Similarly, we have
\begin{eqnarray*}
|\eta[z](\bar s;t,x)-\eta[z](s;t,x)| = \left|\int_s^{\bar s} c(\zeta,\eta[z](\zeta;t,x),z_\zeta)\,d\zeta \right|\leq \left|\int_{s}^{\bar s}\hat c(\zeta)\,d\zeta\right|.
\end{eqnarray*}
{\it Step 3.} \emph{Estimate of some integrals.} By the definition of $\alpha=\alpha[z](t,x)$ we have the integral identity
$$0=x-\int_\alpha^tc(\zeta,\eta[z](\zeta;t,x),z_\zeta)\,d\zeta\quad \mbox{for}\quad \alpha>0.$$
Denote $\bar\alpha=\alpha[z](t,\bar x).$ Suppose that $\alpha\leq \bar\alpha.$ Then we have
$$0=\bar x-\int_{\bar\alpha}^{t} c(\zeta,\eta[z](\zeta;t,\bar{x}),z_\zeta)\,d\zeta.$$
If we subtract these identities, then
$$\left|\int_{\alpha}^{\bar\alpha}c(\zeta,\eta[z](\zeta;t,\bar{x}),z_\zeta)\,d\zeta\right|\leq |\bar x-x|+\int_{\alpha}^{t} L_c(\zeta)\left|\eta[z](\zeta;t,\bar{x})-\eta[z](\zeta;t,x)\right|\,d\zeta. 
$$
Applying Assumption $[c,2,3]$ and Step $1$ we get
\begin{eqnarray*}
\varepsilon_0\int_{\alpha}^{\bar\alpha}\hat c(\zeta)\,d\zeta\leq |\bar x-x|+\int_{\alpha}^{t}L_c(\zeta)\,|\bar x-x|\,\exp\left(\int_{\zeta}^{t} L_c(s)ds   \right)\,d\zeta.
\end{eqnarray*}
Consequently, we have
\begin{eqnarray*}
\varepsilon_0\int_{\alpha}^{\bar\alpha}\hat c(\zeta)\,d\zeta\leq |\bar x-x|\,\exp\left(\int_{\alpha}^{t}L_c(\zeta)\,d\zeta\right).
\end{eqnarray*}
{\it Step 4.} \emph{Estimate of increments of $\lambda$ and $W$ along characteristics.} Since $u(s,\cdot)$ is Lipschitz continuous, we get
\begin{eqnarray*}
\left|u(s,\eta[z](s;t,\bar x))-u(s,\eta[z](s;t,x))\right| &\leq& L_u(s)\left|\eta[z](s;t,\bar x)-\eta[z](s;t,x)\right|\\
&\leq& L_u(s)\,|\bar x-x|\,\exp\left(\int_s^tL_c(\zeta)\,d\zeta\right)
\end{eqnarray*}
for $0\leq s\leq t.$
This inequality is applied to the estimates of increments of $\lambda$ and $W$, in particular, we have
\begin{eqnarray*}
\left|\lambda(P_{t,\bar x}^{u,z}(s))-\lambda\left(P_{t,x}^{u,z}(s)\right)\right|&\leq& L_\lambda(s)\,|\eta[z](s;t,\bar{x})-\eta[z](s;t,x)|\\ &&+L_\lambda(s)\,|u(s,\eta[z](s;t,\bar x))-u(s,\eta[z](s;t,x))|\\
&\leq& L_\lambda(s)\,|\bar x- x|\, \left(1+L_u(s)\right)\,\exp\left(\int_s^tL_c(\zeta)\,d\zeta\right)
\end{eqnarray*}
for $0\leq s\leq t.$ The estimate of $\left|W\left(P_{t,\bar x}^{u,z}(s)\right)-W\left(P_{t,x}^{u,z}(s)\right)\right|$ is similar.\\
{\it Step 5.} \emph{Estimate of $\tilde u(\bar t,0)-\tilde u(t,0).$} Take $t\leq\bar t.$ From (\ref{rr-cal0}) we obtain the inequality
\begin{eqnarray*}
\lefteqn{|\tilde u(\bar t,0)-\tilde u(t,0)| \leq
\int_t^{\bar t} \left\|K(\bar t,\eta[z](\bar t;\xi,0))\right\|_{{\mathcal C}_{+}^\ast}\,\|\tilde u_\xi(\cdot,0)\|\,c(\xi,0,z_\xi)}\\
&&\times\exp\left(\int_{\xi}^{\bar t}W\left(P_{\xi,0}^{u,z}(s)\right)\,ds\right)\,d\xi\\
&+&\int_0^t \left\|K(\bar t,\eta[z](\bar t;\xi,0))-K(t,\eta[z](t;\xi,0))\right\|_{{\mathcal C}_{+}^\ast}\,\|\tilde u_\xi(\cdot,0)\|\,c(\xi,0,z_\xi)\\
&&\times\exp\left(\int_{\xi}^{\bar t}W\left(P_{\xi,0}^{u,z}(s)\right)\,ds\right)\,d\xi\\
&+&\int_0^t \left\|K(t,\eta[z](t;\xi,0))\right\|_{{\mathcal C}_{+}^\ast}\,\|\tilde u_\xi(\cdot,0)\|\,c(\xi,0,z_\xi)\\
&&\times\left|\exp\left(\int_{\xi}^{\bar t}W\left(P_{\xi,0}^{u,z}(s)\right)\,ds\right)-\exp\left(\int_{\xi}^{t}W\left(P_{\xi,0}^{u,z}(s)\right)\,ds\right)\right|\,d\xi\\
&+&\int_0^\infty \left\|K(\bar t,\eta[z](\bar t;0,y))-K(t,\eta[z](t;0,y))\right\|_{{\mathcal C}_{+}^\ast}\,\|\varphi(\cdot,y)\|\\
&&\times\exp\left(\int_{0}^{\bar t}W\left(P_{0,y}^{u,z}(s)\right)\,ds\right)\,dy\\
&+&\int_0^\infty \left\|K(t,\eta[z](t;0,y))\right\|_{{\mathcal C}_{+}^\ast}\,\|\varphi(\cdot,y)\|\\
&&\times\left|\exp\left(\int_0^{\bar t}W\left(P_{0,y}^{u,z}(s)\right)\,ds\right)-\exp\left(\int_0^{t}W\left(P_{0,y}^{u,z}(s)\right)\,ds\right)\right|\,dy.
\end{eqnarray*}
Applying Lemma \ref{war-1-3}, Step $4$ and Assumptions $[K],$ $[c,3],$ $[W,3],$ we obtain
\begin{eqnarray*}
\lefteqn{|\tilde u(\bar t,0)-\tilde u(t,0)|\leq\kappa^2\int_t^{\bar t} Z(\xi)\,\hat c(\xi)\, \exp\left(\int_\xi^{\bar t} M_W(s)\,ds\right)\,d\xi}\\
&+& \kappa\,L_K\int_0^t \left[\int_{t}^{\bar t} \hat c(s)\,ds+ |\eta[z](\bar t;\xi,0)-\eta[z](t;\xi,0)|\right]\,Z(\xi)\,\hat c(\xi)\\
&&\times\exp\left(\int_\xi^{\bar t} M_W(s)ds\right)\,d\xi\\
&+&\kappa^2\int_{0}^{t}Z(\xi)\,\hat c(\xi)\,\int_t^{\bar t} \left|W\left(P_{\xi,0}^{u,z}(s)\right)\right|\,ds\,\exp\left(\int_{\xi}^{\bar t}M_W(s)\,ds\right) \,d\xi
\end{eqnarray*}
\begin{eqnarray*}
&+&L_K\int_0^{\infty} \left[\int_t^{\bar t}\hat c(s)\,ds+|\eta[z](\bar t;0,y)-\eta[z](t;0,y)|\right]\,\|\varphi(\cdot,y)\|\\
&&\times\exp\left(\int_{0}^{\bar t}M_W(s)\,ds\right)\,dy\\
&+& \kappa\int_0^{\infty} \|\varphi(\cdot,y)\|\,\int_t^{\bar t} \left|W\left(P_{0,y}^{u,z}(s)\right)\right|\,ds\,\exp\left(\int_{0}^{\bar t}M_W(s)\,ds\right)\,dy.
\end{eqnarray*}
By the last inequality from Step $2$ we arrive at the condition $4$ for $\tilde u:$
\begin{eqnarray*}
|\tilde u(\bar t,0)-\tilde u(t,0)|\leq G_u\int_{t}^{\tilde t}\hat c(s)\,ds,
\end{eqnarray*}
where $G_u$ is the same as in Definition \ref{defX}, $3.$\\
{\it Step 6.} \emph{Estimates of increments of $\tilde u$ for $\bar x$ and $x.$} Denote $\alpha=\alpha[z](t,x)$ and $\bar\alpha=\alpha[z](t,\bar x).$
If $\alpha=\bar\alpha=0,$ then Assumptions $[\varphi],$ $[\lambda,3]$ together with Steps $1$ and $4$, applied to equation (\ref{rr-cal}), imply the estimates
\begin{align*}
&|\tilde u(t,\bar x)-\tilde u(t,x)| \leq L_{\varphi}\,|\eta[z](0;t,\bar x)-\eta[z](0;t,x)| \exp\left(\int_0^tM_\lambda(s)\,ds\right)\\
&+\|\varphi\|\,\exp\left(\int_0^t M_\lambda (s)\,ds\right)\int_0^t L_\lambda(s)\,|\bar x - x|\,(1+L_u(s))\,\exp\left(\int_s^tL_c(\zeta)\,d\zeta\right)\,ds\\
&\leq |\bar x-x|\,\exp\left(\int_0^t \left(L_c(s) + M_\lambda(s)\right)\,ds\right)\left\{L_{\varphi}+\|\varphi\|\,\int_0^t L_\lambda(s)(1+L_u(s))\,ds\right\}.
\end{align*}
If $0<\alpha<\bar\alpha,$ then, utilizing Step $4,$ we obtain
\begin{eqnarray*}
\lefteqn{|\tilde u(t,\bar x)-\tilde u(t,x)|}\\
&\leq& G_u\int_\alpha^{\bar\alpha} \hat c(s)\,ds\,\exp\left(\int_{\bar\alpha}^{t} M_\lambda(s)\,ds\right)
+ \kappa\, Z(\alpha)\,\exp\left(\int_{\alpha}^{t} M_\lambda(s)\,ds\right)\\
&&\times\left\{\int_{\alpha}^{\bar\alpha}M_{\lambda}(s)\,ds+\int_{\bar\alpha}^t L_\lambda(s)\,|\bar x - x|\left(1+L_u(s)\right)\,\exp\left(\int_s^tL_c(\zeta)\,d\zeta\right)\,ds\right\}.
\end{eqnarray*}
By Step $3,$ we have
$$\int_{\alpha}^{\bar\alpha}\hat c(s)\,ds \leq \frac{1}{\varepsilon_0}\,|\bar x-x|\, \exp\left(\int_\alpha^t L_c(s)\,ds\right).$$
Hence we get the inequality
\begin{multline*}
|\tilde u(t,\bar x) - \tilde u(t,x)| \leq |\bar x-x| \exp\left(\int_0^t\left(L_c(s)+M_\lambda(s)\right)\,ds\right)\\
\times\,\left\{\frac{G_u}{\varepsilon_0} +\kappa\, Z(a)\,\left[\frac{\left\|M_{\lambda}/\hat c\right\|}{\varepsilon_0}+\int_0^t L_\lambda(s)\left(1+L_u(s)\right)\,ds\right]\right\}.
\end{multline*}
For arbitrary $x,\bar x\in \rr_+$ one can find an intermediate point $x^\ast$ between them such that the differences $|\tilde u(t,\bar x)-\tilde u(t,x^\ast)|$ and $|\tilde u(t,x^\ast)-\tilde u(t,x)|$ have the upper bounds from the above two cases. Thus we deduce the desired inequality
\begin{eqnarray*}
|\tilde u(t,\bar x) - \tilde u(t,x)| \leq L_u(t)\,|\bar x-x|
\end{eqnarray*}
for all $x,\bar x\in\rr_+,$ where $L_u$ is defined in Definition \ref{defX}, $4.$
\end{proof}
\begin{remark}\label{rem:Lu}
The function $L_u$ from Definition \ref{defX}, $4$ satisfies the following integral equation  
\begin{multline*}
L_u(t)=\exp\left(\int_0^a\left(L_c(s)+M_\lambda(s)\right)\,ds\right)\\
\times \left\{L_{\varphi}+\frac{G_u+\kappa\, Z(a)\,\|M_\lambda/\hat{c}\|}{\varepsilon_0}
+\left[\kappa Z(a)+\|\varphi\|\right] \int_0^t L_\lambda(s)\left(1+L_u(s)\right)\,ds\right\}.
\end{multline*}
\end{remark}
\begin{proof}[Proof of Lemma \ref{zwez}]
Take any $(\bar u,\bar z),$ $(u,z)\in\mathcal{X}$ and $(t,x)\in E.$ Let $B\colon[0,a]\to \rr$ be a positive, continuous and nondecreasing function whose precise specification will be given later.\\
{\it Step 1.} \emph{Estimate of increments of $\eta$ for $\bar z$ and $z.$} By the Gronwall lemma we get 
\begin{equation*}
|\eta[\bar z](s;t,x)-\eta[z](s;t,x)| \leq \|(\bar z-z)/B\| \left|\int_s^tB(\zeta)\,L_c(\zeta)\,d\zeta\,\exp\left(\int_s^t L_c(\zeta)\,d\zeta\right)\right|
\end{equation*} 
whenever $s$ belongs to the domains of both characteristics.\\
{\it Step 2.} \emph{Estimate of some integrals for $\alpha[\bar z]$ and $\alpha[z].$} Denote $\alpha=\alpha[z](t,x)$ and $\bar\alpha=\alpha[\bar z](t,x).$ Assume that $0<\alpha <\bar\alpha.$ Applying (\ref{rr-calch}) to both characteristics and Assumption $[c,3],$ we obtain the estimate
\begin{multline*}
\varepsilon_0\int_{\alpha}^{\bar\alpha}\hat{c}(s)\,ds\leq\int_{\alpha}^{t} L_c(s)\,\left\{\left|\eta[\bar z](s;t,x)-\eta[z](s;t,x)\right|+\|\bar z_s-z_s\|\right\}\,ds \\
\leq \|(\bar z-z)/B\|\,\int_\alpha^t L_c(s)\,\left\{\int_s^t B(\zeta)\,L_c(\zeta)\,d\zeta\,\exp\left(\int_s^t L_c(\zeta)\,d\zeta\right)+B(s)\right\}\,ds.
\end{multline*}
The second inequality is a simple consequence of Step 1. \\
{\it Step 3.} \emph{Estimate of increments of $\lambda$ and $W$ along characteristics.} We start with the difference of $\bar u$ and $u$ taken along their characteristics $\eta[\bar z]$ and $\eta[z].$ Using the function $L_u,$ defined by the formula in Remark \ref{rem:Lu}, we get
\begin{multline*}
|\bar u(s,\eta[\bar z](s;t,x))-u(s,\eta[z](s;t,x))|\leq \|(\bar u-u)/B\|\,B(s)\\
+L_u(s)\,\|(\bar z -z)/B\|\,\left|\int_{s}^{t}B(\zeta)\,L_c(\zeta)\,d\zeta\,\exp\left(\int_{s}^{t}L_c(\zeta)\,d\zeta\right) \right|.
\end{multline*}
By Assumption $[\lambda,2]$ and the above inequality, we obtain
\begin{eqnarray*}
\lefteqn{\left|\lambda\left(P_{t,x}^{\bar u,\bar z}(s)\right)-\lambda\left(P_{t,x}^{u,z}(s)\right)\right|}\\
&\leq& L_\lambda(s)\,\left\{\left(1+L_u(s)\right)\,\|(\bar z-z)/B\| \left|\int_{s}^{t}B(\zeta)L_c(\zeta)\,d\zeta\,\exp\left(\int_{s}^{t}L_c(\zeta)\,d\zeta\right)\right|\right.\\
&&+\left.\vphantom{\int_0^t}\|(\bar u-u)/B\|\,B(s)+\|(\bar z-z)/B\|\,B(s)\right\}.
\end{eqnarray*}
A similar estimate can be derived for increments of $W.$\\
{\it Step 4.} \emph{Estimate of $\tilde{\bar u}(t,0)-\tilde u(t,0).$} 
From (\ref{rr-cal0}) we have 
\begin{align*}
&|\tilde{\bar u}(t,0)-\tilde u(t,0)|\\
&\leq
\kappa\,L_K\int_0^t |\eta[\bar z](t;\xi,0)-\eta[z](t;\xi,0)|\,Z(\xi)\,\hat c(\xi)\,\exp\left(\int_\xi^t M_W(s)\,ds\right)\,d\xi\\
&+\kappa \int_0^t \|{\tilde{\bar u}}_\xi(\cdot,0)-{\tilde u}_\xi(\cdot,0)\|\,\hat c(\xi)\,\exp\left(\int_\xi^tM_W(s)\,ds\right)\,d\xi\\
&+\kappa^2 \int_0^t Z(\xi)\,L_c(\xi)\,\|\bar z_\xi-z_\xi\|\,\exp\left(\int_\xi^tM_W(s)\,ds\right)\,d\xi\\
&+\kappa^2 \int_0^t Z(\xi)\hat c(\xi)\left|\exp\left(\int_\xi^t W\left(P^{\bar u,\bar z}_{\xi,0}(s)\right)\,ds\right)-
\exp\left(\int_\xi^t W\left(P^{u,z}_{\xi,0}(s)\right)\,ds\right)\right|\,d\xi
\end{align*}
\begin{align*}
&+L_K\int_0^\infty |\eta[\bar z](t;0,y)-\eta[z](t;0,y)|\,\|\varphi(\cdot,y)\|\,\exp\left(\int_0^t M_W(s)\,ds\right)\,dy\\
&+\kappa\int_0^\infty \|\varphi(\cdot,y)\|\,\left|\exp\left(\int_0^t W\left(P^{\bar u,\bar z}_{0,y}(s)\right)\,ds\right)-
\exp\left(\int_0^t W\left(P^{u,z}_{0,y}(s)\right)\,ds\right)\right|\,dy.
\end{align*}
Applying the Gronwall inequality, we get
\begin{eqnarray*}
|\tilde{\bar u}(t,0)-\tilde u(t,0)|\leq C_0\,\|(\bar u-u,\bar z-z)\|_B\,\int_0^t\hat c(s)\,B(s)\,ds
\end{eqnarray*}
with a positive constant $C_0$ depending on the data. Applying the same technique to (\ref{rr-calz}), we get the following estimate
\begin{eqnarray*}
|\tilde{\bar z}(t)-\tilde z(t)|\leq C_1\,\|(\bar u-u,\bar z-z)\|_B\,\int_0^t\hat c(s)\,B(s)\,ds.
\end{eqnarray*}
{\it Step 5.} \emph{Estimate of $\tilde{\bar u}(t,x)-\tilde u(t,x).$} Denote $\alpha=\alpha[z](t,x)$ and $\bar\alpha=\alpha[\bar z](t, x).$
If $\alpha=\bar\alpha=0,$ then Assumptions $[\varphi],$ $[\lambda,3]$ and Step $3$ imply
\begin{eqnarray*}
|\tilde{\bar u}(t, x)-\tilde u(t,x)| \leq C_2\,\|(\bar u-u,\bar z-z)\|_B \,\int_0^t \hat c(s)\,B(s)\,ds.
\end{eqnarray*}
If $0<\alpha<\bar\alpha,$ then Assumption $[\lambda,3],$ previous Steps $2,$ $3,$ $4$ and Lemma \ref{war-4-5} imply
\begin{eqnarray*}
|\tilde{\bar u}(t,x)-\tilde u(t,x)|\leq C_3 \,\|(\bar u-u,\bar z-z)\|_B \,\int_0^t \hat c(s)\,B(s)\,ds.
\end{eqnarray*}
In the third case $0=\alpha<\bar\alpha$ (or $0=\bar\alpha<\alpha$) we consider the family of functions 
$$(u_\theta,z_\theta):=\theta(u,z)+(1-\theta)(\bar u,\bar z) \quad \mbox{for}\quad \theta\in[0,1].$$ 
We analyze the mapping 
$$
[0,1]\ni\theta\longmapsto\left(\alpha[z_\theta](t,x),\eta[z_\theta](\alpha[z_\theta](t,x);t,x)\right),
$$
whose values belong to the axes $0t$ and $0x.$ By the continuous dependence 
there exists $\theta\in[0,1]$ for which the point $(0,0)$ is attained. Then we have $\alpha[z_\theta](t,x)=0,$ thus $\eta[z_\theta](0;t,x)=0.$
Hence we get an intermediate point $({\tilde u}_{\theta},{\tilde z}_{\theta})=\mathcal{T}(u_{\theta},z_{\theta}),$ for which we have
$$
|\tilde u_\theta(t,x)-\tilde u(t,x)| \leq C_2\,\|(u_\theta-u,z_\theta-z)\|_B \,\int_0^t \hat c(s)\,B(s)\,ds
$$
and
$$
|\tilde{\bar u}(t,x)-\tilde{u}_\theta(t,x)| \leq C_3\,\|(\bar u-u_\theta,\bar z-z_\theta)\|_B \,\int_0^t \hat c(s)\,B(s)\,ds.
$$
Due to this observation we can reduce the estimate of $|\tilde{\bar u}(t, x)-\tilde u(t,x)|$ to the previous two cases 
\begin{eqnarray*}
|\tilde{\bar u}(t, x)-\tilde u(t,x)| &\leq& |\tilde{\bar u}(t, x)-\tilde u_\theta(t,x)|+|\tilde u_\theta(t, x)-\tilde u(t,x)|\\
&\leq& (C_2+C_3)\, \|(\bar u-u,\bar z-z)\|_B \,\int_0^t \hat c(s)\,B(s)\,ds.
\end{eqnarray*}
{\it Step 6.} \emph{The Bielecki norm.} Recall that $(\bar u,\bar z), (u,z)\in\mathcal{X}.$ 
In force of Steps $4,$ $5$ we derive
\begin{eqnarray*}
\frac{|\tilde{\bar z}(t)-\tilde z(t)|}{B(t)}
\leq
\|(\bar u-u,\bar z-z)\|_B\,\frac{C_1}{B(t)}\,\int_0^t \hat c(s)\,B(s)\,ds
\end{eqnarray*}
and
\begin{eqnarray*}
\frac{|\tilde{\bar u}(t,x)-\tilde u(t,x)|}{B(t)}
\leq
\|(\bar u-u,\bar z-z)\|_B\,\frac{C_2+C_3}{B(t)}\,\int_0^t \hat c(s)\,B(s)\,ds.
\end{eqnarray*}
From these relations we get
\begin{multline*}
\max\left\{\frac{|\tilde{\bar u}(t,x)-\tilde u(t,x)|}{B(t)},\frac{|\tilde{\bar z}(t)-\tilde z(t)|}{B(t)}\right\}\\
\leq \|(\bar u-u,\bar z-z)\|_B\,\frac{C_1+C_2+C_3}{B(t)}\,\int_0^t \hat c(s)\,B(s)\,ds.
\end{multline*}
Since we intend to estimate the right-hand side by $\Theta \|(\bar u -u, \bar z-z)\|_B,$ it suffices to solve the following elementary comparison equation
\begin{eqnarray*}
\Theta+(C_1+C_2+C_3)\,\int_0^t \hat c(s)\,B(s)\,ds=\Theta B(t).
\end{eqnarray*}
Its solution is given by
\begin{eqnarray*}
B(t)=\exp\left(\frac{C_1+C_2+C_3}{\Theta}\,\int_0^t \hat c(s)\,ds\right).
\end{eqnarray*}
Now it is seen that 
$$ \|(\tilde{\bar u}-\tilde u,\tilde{\bar z}-\tilde z)\|_B\leq \Theta\,\|(\bar u - u,\bar z-z)\|_B.$$
This completes the proof.
\end{proof}

\begin{remark}\label{rem_warbrzeg}
We explain how formula (\ref{rr-cal0}) can be regarded as a fixed point equation for the renewal condition (\ref{odn}). Based on (\ref{odn}) and (\ref{rr-cal}), we get
\begin{equation}\label{eq:tildeu}
\begin{split}\tilde u(t,0)&=\int_0^{\eta_0(t)} K(t,x)\,\tilde u_\alpha(\cdot,0)\,\exp\left(\int_{\alpha}^{t}\lambda\left(P_{t,x}^{u,z}(s)\right)\,ds\right)\,dx\\
&\quad +\,
\int^{\infty}_{\eta_0(t)}K(t,x)\,\varphi(\cdot,\eta[z](0;t,x))\,\exp\left(\int_0^{t}\lambda\left(P_{t,x}^{u,z}(s)\right)\,ds\right)\,dx
\end{split}
\end{equation}
for $(t,x)\in E,$ where $\alpha=\alpha[z](t,x).$ Using the appropriate change of variables, i.e. $\xi=\alpha[z](t,x)$ to the first integral in (\ref{eq:tildeu}) and $y=\eta[z](0;t,x)$ to the second integral, we obtain (\ref{rr-cal0}). Similar arguments apply to the function $\tilde z,$ for which we get the explicit formula
\begin{equation}\label{rr-calz}
\begin{split}
\tilde z(t)&=\int_0^t \tilde u(\xi,0)\,c(\xi,0,z_\xi)\,\exp\left(\int_\xi^t W\left(P_{\xi,0}^{u,z}(s)\right)\,ds\right)\,d\xi\\
&\quad +\, \int^{\infty}_0 \varphi(0,y)\,\exp{\left(\int_0^{t}W\left(P_{0,y}^{u,z}(s)\right)\,ds\right)}\,dy
\end{split}
\end{equation}
for $(t,x)\in E.$
These representations of $\tilde u(t,0)$ and $\tilde z(t)$ are useful in a priori estimates.
\end{remark}
\section*{Acknowledgments}
We are greatly indebted to the anonymous referees for their valuable comments and suggestions. They improved the whole text and made it more reader friendly.

\end{document}